\newtheorem*{ThmA}{Theorem~A}
\newtheorem*{ThmB}{Theorem~B}
\newaliascnt{Lem}{Thm}
\newtheorem{Lem}[Lem]{Lemma}
\newaliascnt{Prop}{Thm}
\newtheorem{Prop}[Prop]{Proposition}
\newaliascnt{Cor}{Thm}
\newtheorem{Cor}[Cor]{Corollary}
\newaliascnt{Con}{Thm}
\theoremstyle{definition}
\newaliascnt{Def}{Thm}
\newtheorem{Def}[Def]{Definition}
\newaliascnt{Ex}{Thm}
\renewcommand{\phi}{\varphi}
\newcommand{\C}{\mathrm{C}}
\newcommand{\N}{\mathrm{N}}
\newcommand{\Z}{\mathrm{Z}}
\newcommand{\ZZ}{\mathbb{Z}}
\newcommand{\CC}{\mathbb{C}}
\newcommand{\QQ}{\mathbb{Q}}
\newcommand{\RR}{\mathbb{R}}
\newcommand{\NN}{\mathbb{N}}
\newcommand{\CG}{\mathcal{G}}
\newcommand{\CN}{\mathcal{N}}
\newcommand{\CA}{\mathcal{A}}
\newcommand{\CW}{\mathcal{W}}
\newcommand{\CS}{\mathcal{S}}
\newcommand{\Aut}{\operatorname{Aut}}
\newcommand{\AGammaL}{\operatorname{A\Gamma L}}
\newcommand{\GL}{\operatorname{GL}}
\newcommand{\SL}{\operatorname{SL}}
\newcommand{\Irr}{\operatorname{Irr}}
\newcommand{\IBr}{\operatorname{IBr}}
\newcommand{\Gal}{\operatorname{Gal}}
\newcommand{\tr}{\operatorname{tr}}
\mathchardef\ordinarycolon\mathcode`\:  
\title{Bounding the number of characters\\ in a block of a finite group}
\author{Benjamin Sambale\footnote{Fachbereich Mathematik, TU Kaiserslautern, 67653 Kaiserslautern, Germany, 
\href{mailto:sambale@mathematik.uni-kl.de}{sambale@mathematik.uni-kl.de}\newline
Institut für Mathematik, FSU Jena, 07737 Jena, Germany, \href{mailto:benjamin.sambale@uni-jena.de}{benjamin.sambale@uni-jena.de}}}
\date{\today}
\begin{document}
\frenchspacing
\maketitle
\begin{abstract}\noindent
We present a strong upper bound on the number $k(B)$ of irreducible characters of a $p$-block $B$ of a finite group $G$ in terms of local invariants. More precisely, the bound depends on a chosen major $B$-subsection $(u,b)$, its normalizer $\N_G(\langle u\rangle,b)$ in the fusion system and a weighted sum of the Cartan invariants of $b$.
In this way we strengthen and unify previous bounds given by Brauer, Wada, Külshammer--Wada, Héthelyi--Külshammer--Sambale and the present author. 
\end{abstract}

\textbf{Keywords:} number of characters in a block, Cartan matrix, Brauer's $k(B)$-Conjecture\\
\textbf{AMS classification:} 20C15, 20C20

\section{Introduction}

Let $B$ be a $p$-block of a finite group $G$ with defect $d$. Since Richard Brauer~\cite{Brauer46} conjectured that the number of irreducible characters $k(B)$ in $B$ is at most $p^d$, there has been great interest in bounding $k(B)$ in terms of local invariants. Brauer and Feit~\cite{BrauerFeit} used some properties of the Cartan matrix $C=(c_{ij})\in\ZZ^{l(B)\times l(B)}$ of $B$ to prove their celebrated bound $k(B)\le p^{2d}$ (here and in the following $l(B)$ denotes the number of irreducible Brauer characters of $B$). In the present paper we investigate stronger bounds by making use of further local invariants. By elementary facts on decomposition numbers, it is easy to see that 
\begin{equation}\label{trace}
k(B)\le\tr(C)
\end{equation}
where $\tr(C)$ denotes the trace of $C$. However, it is not true in general that $\tr(C)\le p^d$. In fact, there are examples with $\tr(C)>l(B)p^d$ (see \cite{NS}) although Brauer already knew that $k(B)\le l(B)p^d$ (see \autoref{brauer} below) and this was subsequently improved by Olsson~\cite[Theorem~4]{OlssonIneq}. 
For this reason, some authors strengthened \eqref{trace} in a number of ways. Brandt~\cite[Proposition~4.2]{Brandt} proved
\[k(B)\le\tr(C)-l(B)+1\]
and this was generalized by the present author in \cite[Proposition~8]{SambaleC3} to
\[k(B)\le\sum_{i=1}^m\det(C_i)-m+1\]
where $S_1,\ldots,S_m$ is a partition of $\{1,\ldots,l(B)\}$ and $C_i:=(c_{st})_{s,t\in S_i}$. 
Using different methods, Wada~\cite{Wada6} observed that
\begin{equation}\label{W}
k(B)\le \tr(C)-\sum_{i=1}^{l(B)-1}c_{i,i+1}.
\end{equation}
In Külshammer--Wada~\cite{KuelshammerWada}, the authors noted that \eqref{W} is a special case of
\begin{equation}\label{KW}
k(B)\le\sum_{1\le i\le j\le l(B)}q_{ij}c_{ij}
\end{equation}
where $q(x)=\sum_{1\le i\le j\le l(B)}q_{ij}x_ix_j$ is a (weakly) positive definite integral quadratic form.

Since $C$ is often harder to compute than $k(B)$, it is desirable to replace $C$ by the Cartan matrix of a Brauer correspondent of $B$ in a proper subgroup. For this purpose let $D$ be a defect group of $B$ and choose $u\in\Z(D)$. Then a Brauer correspondent $b$ of $B$ in $\C_G(u)$ has defect group $D$ as well. 
The present author replaced $c_{ij}$ in \eqref{KW} by the corresponding entries of the Cartan matrix $C_u$ of $b$ (see \cite[Lemma~1]{SambalekB2}).

In Héthelyi--Külshammer--Sambale~\cite[Theorem~2.4]{HKS} we have invoked Galois actions to obtain stronger bounds although only in the special cases $p=2$ and $l(b)=1$ (see \cite[Theorems~3.1 and 4.10]{HKS}). More precisely, in the latter case we proved
\begin{equation}\label{HKS}
k(B)\le\sum_{i=1}^\infty p^{2i}k_i(B)\le\Bigl(n+\frac{|\langle u\rangle|-1}{n}\Bigr)\frac{p^d}{|\langle u\rangle|}\le p^d=\tr(C_u)
\end{equation}
where $n:=\lvert\N_G(\langle u\rangle,b):\C_G(u)|$ and $k_i(B)$ is the number of irreducible characters of height $i\ge 0$ in $B$. This is a refinement of a result of Robinson~\cite[Theorem~3.4.3]{RobinsonNumber}.
In \cite[Theorem~2.6]{Sambalerefine}, the present author relaxed the condition $l(b)=1$ to the weaker requirement that $\CN:=\N_G(\langle u\rangle,b)/\C_G(u)$ acts trivially on the set $\IBr(b)$ of irreducible Brauer characters of $b$.

In this paper we replace quadratic forms by a matrix $W$ of weights which allows us to drop all restrictions imposed above.
We prove the following general result which incorporates the previous special cases (see Section~3 for details).

\begin{ThmA}\label{gold}
Let $B$ be a block of a finite group $G$ with defect group $D$. Let $u\in\Z(D)$ and let $b$ be a Brauer correspondent of $B$ in $\C_G(u)$. Let $\mathcal{N}:=\N_G(\langle u\rangle,b)/\C_G(u)$ and let $C$ be the Cartan matrix of the block $\overline{b}$ of $\C_G(u)/\langle u\rangle$ dominated by $b$. Let $W\in\RR^{l(b)\times l(b)}$ such that $xWx^\text{t}\ge 1$ for every $x\in\ZZ^{l(b)}\setminus\{0\}$. Then
\[\boxed{k(B)\le \Bigl(|\mathcal{N}|+\frac{|\langle u\rangle|-1}{|\mathcal{N}|}\Bigr)\tr(WC)\le|\langle u\rangle|\tr(WC).}\]
The first inequality is strict if $\mathcal{N}$ acts non-trivially on $\IBr(b)$ and the second inequality is strict if and only if $1<|\mathcal{N}|<|\langle u\rangle|-1$.
\end{ThmA}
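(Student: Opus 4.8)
The plan is to carry out a Külshammer–Wada type estimate, but over the ring of cyclotomic integers $R:=\ZZ[\zeta]$ with $\zeta:=\zeta_{|\langle u\rangle|}$, using the generalized decomposition numbers of the subsection $(u,b)$ together with the Galois action of $\Gal(\QQ(\zeta)/\QQ)$ and the action of $\mathcal{N}$. Since $u\in\Z(D)$, the block $b$ has defect group $D$, so $(u,b)$ is a \emph{major} $B$-subsection; hence, writing $Q=(d^u_{\chi\phi})_{\chi\in\Irr(B),\,\phi\in\IBr(b)}\in R^{k(B)\times l(b)}$ and $d_\chi:=(d^u_{\chi\phi})_\phi\in R^{l(b)}$ for its rows, no $d_\chi$ vanishes. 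The orthogonality relations give $Q^*Q=C_b$, the Cartan matrix of $b$, and because $\langle u\rangle$ is a central $p$-subgroup of $\C_G(u)$ with $\overline b$ dominated by $b$, one has $C_b=|\langle u\rangle|\,C$. Set $p^a:=|\langle u\rangle|$.

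By a limiting argument we may assume $W$ is rational, so all quantities below lie in $\QQ(\zeta)$. The group $\mathcal{N}$ permutes $\IBr(\overline b)=\IBr(b)$, say by $\sigma_\nu$ for $\nu\in\mathcal{N}$, and under the induced permutation action both $C$ and the set of admissible weight matrices are invariant; replacing $W$ by $\tfrac1{|\mathcal{N}|}\sum_{\nu}\sigma_\nu W\sigma_\nu^{\mathrm t}$ and then by $\tfrac12(W+W^{\mathrm t})$ changes neither $\tr(WC)$ nor the validity of the hypothesis $xWx^{\mathrm t}\ge 1$ ($x\in\ZZ^{l(b)}\setminus\{0\}$). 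So we assume $W=W^{\mathrm t}$ is $\mathcal{N}$-invariant; then $W$ is positive definite and $v\mapsto vW\overline v^{\mathrm t}$ is a positive definite Hermitian form. Via its faithful action on $\langle u\rangle$, $\mathcal{N}$ embeds into $\Gal(\QQ(\zeta)/\QQ)\cong(\ZZ/p^a\ZZ)^\times$, and for $\nu\in\mathcal{N}$ the corresponding automorphism $\gamma_\nu$ satisfies $\gamma_\nu(d^u_{\chi\phi})=d^u_{\chi,\,\sigma_\nu^{-1}(\phi)}$ — this combines the Galois formula for generalized decomposition numbers, the conjugation invariance $d^{{}^gu}_{{}^g\chi,{}^g\phi}=d^u_{\chi\phi}$, and the $G$-invariance of $\chi$. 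Hence $x_\chi:=d_\chi W\overline{d_\chi}^{\mathrm t}$ is a non-zero totally positive element of $K:=\QQ(\zeta)^{\mathcal{N}}$, and
\[\sum_{\chi\in\Irr(B)}x_\chi=\tr\bigl(W\,Q^*Q\bigr)=p^a\,\tr(WC),\qquad\text{so}\qquad\sum_{\chi\in\Irr(B)}\operatorname{Tr}_{K/\QQ}(x_\chi)=\frac{\varphi(p^a)}{|\mathcal{N}|}\,p^a\,\tr(WC).\]

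The first inequality of the theorem reduces to the key estimate: for every $\chi\in\Irr(B)$, with $h(\chi)$ the height of $\chi$,
\[\operatorname{Tr}_{K/\QQ}(x_\chi)\ \ge\ p^{2h(\chi)}\cdot\frac{\varphi(p^a)\,p^a}{|\mathcal{N}|^2+p^a-1}.\]
Indeed, summing over $\chi$ and comparing with the previous display yields $\sum_{i\ge 0}p^{2i}k_i(B)=\sum_\chi p^{2h(\chi)}\le\frac{|\mathcal{N}|^2+p^a-1}{|\mathcal{N}|}\tr(WC)=\bigl(|\mathcal{N}|+\tfrac{p^a-1}{|\mathcal{N}|}\bigr)\tr(WC)$, which even improves on the asserted bound for $k(B)=\sum_{i\ge 0}k_i(B)$. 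The key estimate is the hard part. I would attack it by combining (i) the classical $\mathfrak P$-adic information on the contribution $m^{(u,b)}_{\chi\chi}=d_\chi(p^aC)^{-1}\overline{d_\chi}^{\mathrm t}\neq 0$ of a major subsection, which controls the $(1-\zeta)$-adic valuation of $d_\chi$ in terms of $h(\chi)$, with (ii) an arithmetic–geometric mean and Cauchy–Schwarz estimate for $\operatorname{Tr}_{K/\QQ}(vW\overline v^{\mathrm t})$ over the $\mathcal{N}$-equivariant $v\in R^{l(b)}\setminus\{0\}$: expand $v$ over a $\ZZ$-basis of $R$ adapted to the chain $\QQ\subset\QQ(\zeta_p)\subset\dots\subset\QQ(\zeta_{p^a})$, evaluate $\operatorname{Tr}_{\QQ(\zeta_{p^a})/\QQ}$ via $\operatorname{Tr}_{\QQ(\zeta_{p^a})/\QQ}(\zeta^m)\in\{\varphi(p^a),-p^{a-1},0\}$, and group the resulting terms by the orbits of $\mathcal{N}$ on $\langle u\rangle\setminus\{1\}$ — the unique regular orbit (that of $u$, whose stabiliser in $\mathcal{N}$ is trivial) contributing the $|\mathcal{N}|^2$ and the remaining $p^a-1-|\mathcal{N}|$ elements the rest; most likely one must feed in the generalized decomposition numbers at all powers $u^k$ via Brauer's orthogonality relations so that this orbit count becomes available, and the $\mathcal{N}$-equivariance of $v$ is exactly what keeps the estimate from degenerating. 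When $\mathcal{N}$ acts non-trivially on $\IBr(b)$, equality in (ii) would force $v$ to be supported on a single $\mathcal{N}$-fixed coordinate, incompatible with the non-trivial permutation, so the key estimate, hence the first inequality, is strict. I expect this arithmetic analysis — and especially ruling out the degenerate equality cases uniformly — to be the main obstacle.

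The second inequality is elementary: $|\mathcal{N}|+\tfrac{p^a-1}{|\mathcal{N}|}\le p^a$ is equivalent to $(|\mathcal{N}|-1)(|\mathcal{N}|-(p^a-1))\le 0$, and since $\mathcal{N}$ acts faithfully on $\langle u\rangle$ we have $1\le|\mathcal{N}|\le|\Aut(\langle u\rangle)|=\varphi(p^a)=p^a-p^{a-1}\le p^a-1$, so the product is $\le 0$; it vanishes exactly for $|\mathcal{N}|\in\{1,\,p^a-1\}$, which yields the stated "if and only if" criterion.
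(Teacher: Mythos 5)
Your framework is reasonable at the outset: symmetrizing $W$ and averaging over $\mathcal{N}$, identifying $\mathcal{N}$ with a subgroup of $\Gal(\QQ(\zeta)/\QQ)$, noting that $x_\chi:=d_\chi W\overline{d_\chi}^{\mathrm t}$ lies in the fixed field $K=\QQ(\zeta)^{\mathcal N}$, and computing $\sum_\chi\operatorname{Tr}_{K/\QQ}(x_\chi)=\frac{\varphi(p^a)}{|\mathcal N|}p^a\tr(WC)$ are all correct. Your handling of the second inequality and its equality characterization is also correct. But the proof hinges entirely on the ``key estimate''
\[\operatorname{Tr}_{K/\QQ}(x_\chi)\ \ge\ p^{2h(\chi)}\cdot\frac{\varphi(p^a)\,p^a}{|\mathcal{N}|^2+p^a-1},\]
which you leave unproved, and \emph{this statement is false}. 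As you observe, it would yield the stronger conclusion $\sum_{i\ge 0}p^{2i}k_i(B)\le\bigl(|\mathcal N|+\frac{p^a-1}{|\mathcal N|}\bigr)\tr(WC)$, but the paper explicitly points out that this strengthening fails, citing the principal $2$-block of $\SL(2,3)$ with $u=1$ as a counterexample. Concretely, in that case $p^a=1$, $\mathcal N=1$, $K=\QQ$, the decomposition matrix (up to ordering) has rows $(1,0,0),(0,1,0),(0,0,1),(1,1,1)$ of height $0$ and $(0,1,1),(1,0,1),(1,1,0)$ of height $1$, and with $W=U_3$ one finds $d_\chi W d_\chi^{\mathrm t}=1$ for the height-$1$ rows, whereas your estimate demands $d_\chi W d_\chi^{\mathrm t}\ge p^{2h(\chi)}=4$. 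The contribution $m^{(u,b)}_{\chi\chi}$ controls a $p$-adic (or $\mathfrak P$-adic) valuation of $d_\chi\widetilde C\overline{d_\chi}^{\mathrm t}$ for the \emph{specific} matrix $\widetilde C=p^dC^{-1}$; it does not transfer to an arbitrary integral positive definite $W$, which is exactly where the factor $p^{2h(\chi)}$ breaks down.

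Even with the $p^{2h(\chi)}$ factor removed, the corrected trace estimate $\operatorname{Tr}_{K/\QQ}(x_\chi)\ge\frac{\varphi(p^a)p^a}{|\mathcal N|^2+p^a-1}$ is non-trivial and your sketch (AM--GM and Cauchy--Schwarz over a basis adapted to the tower $\QQ\subset\QQ(\zeta_p)\subset\dots\subset\QQ(\zeta_{p^a})$, together with orbit counting on $\langle u\rangle\setminus\{1\}$) is too vague to assess; it is essentially a placeholder for the main difficulty. The paper avoids this arithmetic entirely by a discrete Fourier transform: it writes $Q=\sum_i A_i\zeta^i$ with integer matrices $A_i$, forms $\mathcal A_q=(A_i)_i$, derives a closed form for $\mathcal A_q^{\mathrm t}\mathcal A_q$ via the orthogonality relations and the permutation matrices $P_\gamma$ (the boxed identity in the proof section), and then constructs an explicit integral positive definite block matrix $\mathcal W$ so that $k(B)\le\tr(\mathcal W\mathcal A_q^{\mathrm t}\mathcal A_q)$ can be evaluated by Kronecker-product bookkeeping and an induction on $q=|\langle u\rangle|$, with the non-trivial-action strictness coming from Lemma~\ref{lemposperm}. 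So not only is the key step of your argument missing, but the lemma you would need is stated in a form that cannot be true; you would need to drop the height-weighted refinement and then actually prove the resulting estimate, which is where the real work (done very differently in the paper) lies.
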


In contrast to \eqref{HKS}, we cannot replace $k(B)$ by $\sum p^{2i}k_i(B)$ in Theorem~A (the principal $2$-block of $\SL(2,3)$ is a counterexample with $u=1$).
By a classical fusion argument of Burnside, the group $\CN$ in Theorem~A is induced from the inertial quotient $\N_G(D,b_D)/D\C_G(D)$ where $b_D$ is a Brauer correspondent of $B$ in $\C_G(D)$ (see \cite[Corollary~4.18]{LocalMethods}). In particular, $\CN$ is a $p'$-group and $|\CN|$ divides $p-1$.

As noted in previous papers, if $u\in D\setminus\Z(D)$, one still gets upper bounds on the number of height $0$ characters and this is of interest with respect to Olsson's Conjecture $k_0(B)\le|D:D'|$ where $D'$ denotes the commutator subgroup of $D$. In fact, we will deduce Theorem~A from our second main theorem:

\begin{ThmB}\label{h0}
Let $B$ be a block of a finite group $G$ with defect group $D$. Let $u\in D$ and let $b$ be a Brauer correspondent of $B$ in $\C_G(u)$. Let $\mathcal{N}:=\N_G(\langle u\rangle,b)/\C_G(u)$ and let $C$ be the Cartan matrix of the block $\overline{b}$ of $\C_G(u)/\langle u\rangle$ dominated by $b$. Let $W\in\RR^{l(b)\times l(b)}$ such that $xWx^\text{t}\ge 1$ for every $x\in\ZZ^{l(b)}\setminus\{0\}$. Then
\[\boxed{k_0(B)\le k_0\bigl(\langle u\rangle\rtimes\CN\bigr)\tr(WC)\le|\langle u\rangle|\tr(WC).}\]
The first inequality is strict if $\mathcal{N}$ acts non-trivially on $\IBr(b)$. 
\end{ThmB}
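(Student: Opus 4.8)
The plan is to control $k_0(B)$ via the contribution of the subsection $(u,b)$ to the block orthogonality relations, combined with a Galois/averaging argument over $\CN$. First I would recall the standard setup: let $\mathrm{IBr}(b)=\{\varphi_1,\dots,\varphi_{l(b)}\}$ and let $d_{\chi\varphi}^{(u,b)}$ denote the generalized decomposition numbers attached to the subsection $(u,b)$. For $\chi\in\Irr(B)$ these lie in $\ZZ[\zeta]$ where $\zeta$ is a primitive $|\langle u\rangle|$-th root of unity, and the column orthogonality relation reads $\sum_{\chi}d_{\chi\varphi}^{(u,b)}\overline{d_{\chi\psi}^{(u,b)}}=c_{\varphi\psi}$, the Cartan invariant of $b$ (equivalently, passing to the block $\overline b$ of $\C_G(u)/\langle u\rangle$ dominated by $b$, the entries of $C$). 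The key point, exploited in \cite{HKS,Sambalerefine}, is that a character $\chi$ of height $0$ contributes a row $d_\chi^{(u,b)}=(d_{\chi\varphi_1}^{(u,b)},\dots)$ whose "norm" with respect to $C^{-1}$ is bounded below: writing $d_\chi$ for this row vector, one has $d_\chi C^{-1}d_\chi^{\mathrm t}\ge$ (a suitable power of $p$)$/|D|$, which after the height-$0$ normalization becomes an integrality statement forcing $d_\chi\ne 0$ and, more precisely, $d_\chi(C^{-1})d_\chi^{\mathrm t}$ is a nonzero element of $\frac1{|\langle u\rangle|}\ZZ[\zeta]$.

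Next I would introduce the weight matrix. Since $xWx^{\mathrm t}\ge 1$ for all $x\in\ZZ^{l(b)}\setminus\{0\}$, and since the generalized decomposition numbers $d_{\chi\varphi}^{(u,b)}$ have coordinates in $\ZZ[\zeta]$, I want to bound the number of height-$0$ characters by $\tr(WC)$ times a "multiplicity" governed by the $\CN$-action. Concretely, expand $d_\chi = \sum_{j} a_j(\chi)\, b_j$ along a $\ZZ$-basis $b_1,\dots,b_{l(b)\varphi(|\langle u\rangle|)}$ of the $\ZZ[\zeta]$-lattice of decomposition numbers; the associated real Gram matrix with respect to $C$ has the block shape $C\otimes M$ where $M$ encodes the trace form of $\ZZ[\zeta]$. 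The orthogonality relations say the columns of the full height-$0$ decomposition matrix are "orthonormal" in this inner product, so the number of height-$0$ rows is at most the trace of the inverse Gram form restricted appropriately — and here is where $W$ enters: replacing $C^{-1}$ by a matrix dominated by $W$ in the sense of the hypothesis, each row contributes at least $1$ (times a $\zeta$-trace factor) so that $k_0(B)\le \tr(WC)\cdot(\text{size of the }\zeta\text{-orbit structure})$. The $\CN$-action on $\IBr(b)$ permutes the $\varphi_i$ and acts compatibly on $\langle u\rangle$, and the Brauer–Galois correspondence identifies the relevant orbit count with $k_0(\langle u\rangle\rtimes\CN)$ — this is exactly the combinatorial identity $|\langle u\rangle\rtimes\CN|=\sum_{\text{orbits}}|\text{orbit}|$ repackaged, giving the clean bound $k_0(B)\le k_0(\langle u\rangle\rtimes\CN)\tr(WC)$. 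The final inequality $k_0(\langle u\rangle\rtimes\CN)\le|\langle u\rangle|$ is immediate since $\langle u\rangle$ is an abelian normal subgroup of index $|\CN|$, so $k_0$ of the semidirect product is at most $|\langle u\rangle\rtimes\CN|/|\CN|=|\langle u\rangle|$ (every height-$0$, i.e. here every irreducible, character of this $p$-group-by-$p'$-group lies in a $\langle u\rangle$-coset worth of Clifford theory).

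I expect the main obstacle to be making the $\CN$-averaging rigorous \emph{without} the simplifying hypotheses of \cite{HKS} and \cite{Sambalerefine} (namely $p=2$, $l(b)=1$, or $\CN$ acting trivially on $\IBr(b)$). In those earlier arguments the permutation action was trivial, so one could average a scalar; in general one must average matrix-valued quantities and the bound $xWx^{\mathrm t}\ge 1$ is only guaranteed on $\ZZ^{l(b)}$, not on all of $\langle u\rangle$-twisted lattices. The resolution will be to replace the single quadratic form by the \emph{matrix} $W$ acting blockwise and to track the $\CN$-action as a genuine permutation of the index set $\{1,\dots,l(b)\}$, so that $\tr(WC)$ is $\CN$-invariant (this needs $W$ chosen, or symmetrized, to commute with the $\CN$-permutation — a harmless adjustment since one may average $W$ over $\CN$ without destroying $xWx^{\mathrm t}\ge 1$). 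Once that bookkeeping is in place, the strictness assertion follows by observing that a non-trivial $\CN$-orbit on $\IBr(b)$ strictly reduces the number of independent contributions, i.e. the averaging inequality is strict; and the implication $k(B)\le k_0(B)\cdot(\text{height bound})$ reduces Theorem~A to Theorem~B when $u\in\Z(D)$, since then $\langle u\rangle$ is central in $D$ and the height-$i$ characters of $B$ are accounted for by the $|\mathcal N|+\frac{|\langle u\rangle|-1}{|\mathcal N|}$ factor via the same orthogonality relations applied to the full decomposition matrix rather than just its height-$0$ part.
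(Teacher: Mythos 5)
Your high-level plan matches the paper's: expand the generalized decomposition matrix $Q$ over a $\ZZ$-basis of $\ZZ[\zeta]$, use the non-vanishing and $p$-adic valuation of height-zero rows (the paper's \autoref{padic}), weight by an integral positive definite matrix, and average over $\CN$ after symmetrizing $W$ to commute with the permutation action (the paper's \autoref{WPPW}). That scaffolding is correct. However, the actual content of the argument is either missing or wrong in three places.

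First, the Gram matrix $\CA_q^\text{t}\CA_q$ of the integer matrix $\CA_q=(A_i:1\le i\le\phi(q))$ is \emph{not} of the form $C\otimes M$ with $M$ a trace form: the paper's identity \eqref{longeq} shows that $A_i^\text{t}A_j$ equals $C$ times an integer combination of the permutation matrices $P_\delta$, $\delta\in\CN$, indexed by congruence conditions on $i,j,i',j'$ modulo $q$. When $\CN$ is non-trivial the permutation matrices do not drop out and the tensor-product heuristic fails; this is precisely the obstacle your proposal anticipates but does not resolve. Handling it requires the blown-up weight matrix $W_m$ of \autoref{lemwn} and the trace inequality \autoref{lemposperm}, plus a separate induction on $q$ (the paper's \autoref{prop}) and a rank computation (\autoref{rank}) to locate a full-rank submatrix $(A_i:i\in J)$ on which the Gram form factors cleanly.

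Second, the constant $k_0(\langle u\rangle\rtimes\CN)$ does not enter via an orbit-count identity ``$|\langle u\rangle\rtimes\CN|=\sum|\text{orbit}|$''. It is computed explicitly by Clifford theory (\autoref{k0}: $k_0(U)=n+\frac{q-n_p}{n_{p'}}$ for $p>2$, and $k_0(U)=|U:U'|$ for $p=2$, split into two cases according to the structure of $(\ZZ/q\ZZ)^\times$), and it emerges as the value $\tr(\CW\CA_q^\text{t}\CA_q)$ for the carefully built $\CW$. Relatedly, your justification of the second inequality is incorrect: it is not true that $k_0(U)\le|U:\langle u\rangle|\cdot|\langle u\rangle|/|\CN|$ follows from $\langle u\rangle$ being abelian normal of index $|\CN|$, and when $n_p>1$ the group $U$ is not ``$p$-group-by-$p'$-group'' with $\langle u\rangle$ as the $p$-part, so that Clifford-theoretic shortcut is unavailable. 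One actually needs to verify $n+\frac{q-n_p}{n_{p'}}\le q$ (equivalently $n_p(n_{p'}+1)\le q$, using $n_p\mid q/p$ and $n_{p'}\mid p-1$) and, for $p=2$, the two subcases separately.

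Third, when $n_p>1$ a genuinely new step is needed that your sketch does not mention: one must show that the height-zero rows do not vanish on the sub-block $\Delta_1$ of $\CA_q^\text{t}\CA_q$ (indexed by $n_p\mid i$). The paper does this by a delicate $p$-adic valuation argument showing $\sum_{i,j\in I_2}a_i\widetilde C a_j^\text{t}\equiv 0\pmod p$, so the height-zero contribution concentrates where the induction applies. Without this, the bound on $\tr(\CW_1\Delta_1)$ does not control $k_0(B)$. In short: right skeleton, but the Gram matrix structure, the weight-matrix construction, the induction, the explicit evaluation of $k_0(\langle u\rangle\rtimes\CN)$, and the $n_p>1$ concentration argument are all missing or misstated.
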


In the situation of Theorem~B we may assume, after conjugation, that $\N_D(\langle u\rangle)/\C_D(u)$ is a Sylow $p$-subgroup of $\CN$ (see \cite[Proposition~2.5]{AKO}). In particular, $\CN=\N_D(\langle u\rangle)/\C_D(u)$ whenever $p=2$.

If $\CN$ acts trivially on $\IBr(b)$, then our bounds cannot be improved in general. To see this, let $\langle u\rangle$ be any cyclic $p$-group, and let $\CN\le\Aut(\langle u\rangle)$. Then $G:=\langle u\rangle\rtimes\CN$ has only one $p$-block $B$. In this situation $l(b)=1$ and $C=(1)$. Hence, $k_0(B)=k_0(G)=k_0(\langle u\rangle\rtimes\CN)\tr(WC)$ for $W=(1)$. Similarly, if $\CN$ is a $p'$-group, then $k(B)=k(G)=|\CN|+\frac{|\langle u\rangle|-1}{|\CN|}$. 

It is known that the ordinary character table of $\C_G(u)/\langle u\rangle$ determines $C$ up to basic sets, i.\,e. up to transformations of the form $S^\text{t}CS$ where $S\in\GL(l(b),\ZZ)$ and $S^\text{t}$ is the transpose of $S$. Then $\widetilde{W}:=S^{-1}WS^{-\text{t}}$ still satisfies  $x\widetilde{W}x^\text{t}\ge 1$ for every $x\in\ZZ^{l(b)}\setminus\{0\}$ and 
\[\tr(\widetilde{W}S^\text{t}CS)=\tr(S^{-1}WCS)=\tr(WC).\] 
Hence, our results do not depend on the chosen basic set. 

\section{Proofs}

First we outline the proof of Theorem~B: For sake of simplicity suppose first that $u=1$. Then every row $d_\chi$ of the decomposition matrix $Q$ of $B$ is non-zero and $Q^\text{t}Q=C$. Hence, 
\[k(B)\le\sum_{\chi\in\Irr(B)}d_\chi Wd_\chi^\text{t}=\tr(QWQ^\text{t})=\tr(WQ^\text{t}Q)=\tr(WC).\]
In the general case we replace $Q$ be the generalized decomposition matrix with respect to the subsection $(u,b)$. Then $Q$ consists of algebraic integers in the cyclotomic field of degree $q:=|\langle u\rangle|$. We apply a discrete Fourier transformation to turn $Q$ into an integral matrix with the same number of rows, but with more columns. At the same time we need to blow up $W$ to a larger matrix with similar properties. Afterwards we use the fact that the rows of $Q$ corresponding to height $0$ characters are non-zero and fulfill a certain $p$-adic valuation. 
For $p=2$ the proof can be completed directly, while for $p>2$ we argue by induction on $q$. Additional arguments are required to handle the case where $|\CN|$ is divisible by $p$. These calculations make use of sophisticated matrix analysis.

We fix the following matrix notation. For $n\in\NN$ let $1_n$ be the identity matrix of size $n\times n$ and similarly let $0_n$ be the zero matrix of the same size. 
Moreover, let
\[U_n:=\frac{1}{2}\begin{pmatrix}
2&-1&&0\\
-1&\ddots&\ddots\\
&\ddots&\ddots&-1\\
0&&-1&2
\end{pmatrix}\in\QQ^{n\times n}.\]
For $d\in\NN$ let $d^{n\times n}$ be the $n\times n$ matrix which has every entry equal to $d$. For $A\in\RR^{n\times n}$ and $B\in\RR^{m\times m}$ we construct the direct sum $A\oplus B\in\RR^{(n+m)\times(n+m)}$ and the Kronecker product $A\otimes B\in\RR^{nm\times nm}$ in the usual manner. Note that $\tr(A\oplus B)=\tr(A)+\tr(B)$ and $\tr(A\otimes B)=\tr(A)\tr(B)$.  
Finally, let $\delta_{ij}$ be the Kronecker delta. We assume that every positive (semi)definite matrix is symmetric. Moreover, we call a symmetric matrix $A\in\RR^{n\times n}$ \emph{integral positive definite}, if $xAx^\text{t}\ge 1$ for every $x\in\RR^n\setminus\{0\}$.

The proof of Theorem~B is deduced from a series of lemmas and propositions. 

\begin{Lem}\label{lemsym}
Every integral positive definite matrix is positive definite.
\end{Lem}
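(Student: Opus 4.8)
The statement to prove is that every integral positive definite matrix (in the sense defined: $xAx^{\text t}\ge 1$ for every $x\in\RR^n\setminus\{0\}$) is positive definite.

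The plan is to derive positive definiteness directly from the defining inequality by a scaling/homogeneity argument. Let $A\in\RR^{n\times n}$ be symmetric with $xAx^{\text t}\ge 1$ for all $x\in\RR^n\setminus\{0\}$. I want to show $xAx^{\text t}>0$ for all such $x$. Fix $x\ne 0$; for any real $\lambda\ne 0$ the vector $\lambda x$ is also nonzero, so $\lambda^2(xAx^{\text t})=(\lambda x)A(\lambda x)^{\text t}\ge 1$. If $xAx^{\text t}\le 0$ this would be violated already for, say, $\lambda=1$; more robustly, letting $\lambda\to 0$ forces $\lambda^2(xAx^{\text t})\to 0$, contradicting the uniform lower bound $1$. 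Hence $xAx^{\text t}>0$ for every $x\ne 0$, which is exactly positive definiteness.

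I would phrase this cleanly in two sentences: for $x\ne 0$ and $t>0$ one has $(tx)A(tx)^{\text t}=t^2\,xAx^{\text t}\ge 1$, so $xAx^{\text t}\ge t^{-2}$ for all $t>0$, and letting $t\to\infty$... wait, that gives $xAx^{\text t}\ge 0$; instead let $t\to 0^+$ is wrong too. The correct move: from $t^2\,xAx^{\text t}\ge 1$ for all $t>0$ we get $xAx^{\text t}\ge 1/t^2>0$ for each fixed $t$, so in particular $xAx^{\text t}\ge 1>0$ taking $t=1$. That is enough. There is essentially no obstacle here; the only subtlety is remembering that ``positive definite'' was defined in the paper to include symmetry, which is given by hypothesis, so nothing extra is needed.

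\begin{proof}
Let $A\in\RR^{n\times n}$ be integral positive definite. By convention $A$ is symmetric, so it remains to show $xAx^{\text t}>0$ for every $x\in\RR^n\setminus\{0\}$. Fix such an $x$. For every real $t>0$ the vector $tx$ is also nonzero, hence
\[t^2\,xAx^{\text t}=(tx)A(tx)^{\text t}\ge 1,\]
so that $xAx^{\text t}\ge t^{-2}>0$. Taking $t=1$ gives $xAx^{\text t}\ge 1>0$, and the claim follows.
\end{proof}
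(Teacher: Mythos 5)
The definition as printed in the paper contains a typo: a matrix is \emph{integral} positive definite when $xAx^{\text t}\ge 1$ for every $x\in\ZZ^n\setminus\{0\}$, not $x\in\RR^n\setminus\{0\}$. This is clear from the adjective ``integral,'' from the hypotheses of Theorems~A and~B (which are stated with $x\in\ZZ^{l(b)}\setminus\{0\}$), from the way the lemma is subsequently applied (e.g.\ in Corollary~\autoref{brauer} to $W=\frac{1}{m}C^{-1}$ with $m=\min\{xC^{-1}x^{\text t}:x\in\ZZ^{l(b)}\setminus\{0\}\}$, a matrix which certainly does not satisfy the real-vector inequality since $xWx^{\text t}\to 0$ as $x\to 0$), and indeed from the fact that with $\RR^n$ the lemma would be a tautology. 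Your proof treats the definition literally, and so proves only the tautology: when you write $(tx)A(tx)^{\text t}\ge 1$, you are invoking the hypothesis at the real vector $tx$, which for the intended definition you are not entitled to do unless $tx$ happens to lie in $\ZZ^n$. Your own aside in the middle of the write-up (``that gives\dots wait, that gives\dots instead\dots'') is a symptom of this: you noticed the statement seemed to collapse, which should have prompted a second look at the definition rather than a one-line proof.

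The content of the lemma, with the corrected definition, is genuinely Diophantine: a real symmetric $W$ with $xWx^{\text t}\ge 1$ on all of $\ZZ^n\setminus\{0\}$ could in principle still be only positive semidefinite (for instance the rank-one form $(x_1-\sqrt 2\,x_2)^2$ is strictly positive on $\ZZ^2\setminus\{0\}$), and one must rule this out by showing that a zero or negative eigendirection would force the existence of nonzero lattice points where the form is $<1$. The paper does this in two steps: a negative eigenvalue is excluded by approximating the eigenvector by a rational vector and clearing denominators; a zero eigenvalue is excluded by Dirichlet's simultaneous approximation theorem, which produces $x\in\ZZ^n$ and $m\in\NN$ with $\lVert x-mv\rVert$ so small that $xWx^{\text t}=(x-mv)W(x-mv)^{\text t}<1$. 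Your argument contains none of this; it needs to be replaced entirely, and the key missing ingredient is precisely the approximation step that turns a (hypothetical) real null vector of $W$ into a nonzero integer vector violating the hypothesis.
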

\begin{proof}
Let $W\in\RR^{n\times n}$ be integral positive definite.
By way of contradiction, suppose that there exists an eigenvector $v\in\RR^n$ of $W$ with eigenvalue $\lambda\le 0$ and (euclidean) norm $1$. 
If $\lambda<0$, choose $x\in\QQ^n$ such that $\lVert x\rVert\le\lVert v\rVert=1$ and $\lVert x-v\rVert<-\frac{\lambda}{2\lVert W\rVert}$ where $\lVert W\rVert$ denotes the Frobenius matrix norm of $W$. Then
\[xWx^\text{t}=(x-v)W(x+v)^\text{t}+vWv^\text{t}\le\lVert x-v\rVert\lVert W\rVert\lVert x+v\rVert+\lambda<0.\]
However, there exists $m\in\NN$ such that $mx\in\ZZ^n$ and $(mx)W(mx)^\text{t}<0$. This contradiction implies $\lambda=0$.
By Dirichlet's approximation theorem (see \cite[Theorem~200]{Hardy}) there exist infinitely many integers $m$ and $x\in\ZZ^n$ such that 
\[\lVert x-mv\rVert<\frac{\sqrt{n}}{\sqrt[n]{m}}.\]
It follows that 
\[xWx^\text{t}=(x-mv)W(x-mv)^\text{t}\le\lVert x-mv\rVert^2\lVert W\rVert<1\]
if $m$ is sufficiently large. Again we have a contradiction.
\end{proof}

Conversely, every positive definite matrix can be scaled to an integral positive definite matrix.
The next lemma is a key argument when dealing with non-trivial actions of $\CN$ on $\IBr(b)$.

\begin{Lem}\label{lemposperm}
Let $A,B\in\RR^{n\times n}$ positive semidefinite matrices such that $A$ commutes with a permutation matrix $P\in\RR^{n\times n}$. Then $\tr(ABP)\le\tr(AB)$. If $A$ and $B$ are positive definite, then $\tr(ABP)=\tr(AB)$ if and only if $P=1_n$.
\end{Lem}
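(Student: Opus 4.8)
The plan is to reduce, via a symmetric square root of $A$, to comparing $\tr(MP)$ with $\tr(M)$ for a single symmetric positive semidefinite matrix $M$, and then to handle that comparison by a symmetrization trick.

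First I would use the spectral theorem to write $A=A^{1/2}A^{1/2}$ with $A^{1/2}$ symmetric positive semidefinite. Since $A^{1/2}$ is a polynomial in $A$ (Lagrange interpolation of $t\mapsto\sqrt t$ at the eigenvalues of $A$), it commutes with $P$. Hence, by cyclic invariance of the trace and $PA^{1/2}=A^{1/2}P$,
\[
\tr(ABP)=\tr\bigl(A^{1/2}BPA^{1/2}\bigr)=\tr\bigl(A^{1/2}BA^{1/2}P\bigr)=\tr(MP),\qquad \tr(AB)=\tr(M),
\]
where $M:=A^{1/2}BA^{1/2}$ is symmetric positive semidefinite, and positive definite when $A$ and $B$ are. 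So it suffices to prove $\tr(MP)\le\tr(M)$ for symmetric positive semidefinite $M$ and a permutation matrix $P$, with equality forcing $P=1_n$ when $M$ is positive definite.

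Since $M=M^\text{t}$, we have $\tr(MP)=\tr((MP)^\text{t})=\tr(P^\text{t}M)=\tr(MP^\text{t})$, so $\tr(MP)=\tfrac12\tr\bigl(M(P+P^\text{t})\bigr)$ and therefore $\tr(M)-\tr(MP)=\tr(MN)$ with $N:=1_n-\tfrac12(P+P^\text{t})$. The matrix $N$ is symmetric positive semidefinite: for real $x$ one has $x N x^\text{t}=\lVert x\rVert^2-xPx^\text{t}\ge\lVert x\rVert^2-\lVert x\rVert\,\lVert Px^\text{t}\rVert=0$, using that $P$ is orthogonal (so $\lVert Px^\text{t}\rVert=\lVert x\rVert$) together with the Cauchy–Schwarz inequality. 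Finally, the trace of a product of two positive semidefinite matrices is nonnegative, since $\tr(MN)=\tr(M^{1/2}NM^{1/2})$ and $M^{1/2}NM^{1/2}$ is positive semidefinite; hence $\tr(MP)\le\tr(M)$, which is the first assertion.

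For the equality case, $P=1_n$ trivially gives $\tr(ABP)=\tr(AB)$. Conversely, if $A$ and $B$ are positive definite and $\tr(ABP)=\tr(AB)$, then $M$ is positive definite and $\tr(M^{1/2}NM^{1/2})=0$; a positive semidefinite matrix of trace zero is zero, so $M^{1/2}NM^{1/2}=0$, and invertibility of $M^{1/2}$ forces $N=0$, i.e. $P+P^\text{t}=2\cdot 1_n$. Since the diagonal entries of a permutation matrix are $0$ or $1$, this gives $P_{ii}=1$ for all $i$, hence $P=1_n$. The only points requiring care are verifying that $A^{1/2}$ commutes with $P$ (handled by the polynomial representation) and tracking positive-definiteness through the reduction in the equality statement; everything else is routine.
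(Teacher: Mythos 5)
Your proof is correct, and it shares the paper's initial reduction (via the symmetric square root $A^{1/2}$, which commutes with $P$, to comparing $\tr(MP)$ with $\tr(M)$ for $M:=A^{1/2}BA^{1/2}$ positive semidefinite) but then finishes differently. The paper argues entrywise: for a positive semidefinite $M=(m_{ij})$ one has $m_{ij}\le(m_{ii}+m_{jj})/2$, so if $\sigma$ is the permutation corresponding to $P$ then $\tr(MP)=\sum_i m_{i\sigma(i)}\le\sum_i\tfrac12(m_{ii}+m_{\sigma(i)\sigma(i)})=\tr(M)$, with strict inequality off the diagonal when $M$ is positive definite. You instead symmetrize $P$, observe that $N:=1_n-\tfrac12(P+P^{\text{t}})$ is positive semidefinite (via orthogonality of $P$ and Cauchy--Schwarz), and invoke the fact that $\tr(MN)\ge 0$ for positive semidefinite $M,N$, with equality forcing $N=0$ when $M$ is positive definite. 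Both routes are elementary and about equally short; the paper's entrywise argument is perhaps more concrete (it pinpoints exactly which terms in the sum are strict), while yours is cleaner conceptually in that it reduces to two standard facts (positivity of $1_n-\tfrac12(P+P^{\text{t}})$ and nonnegativity of the trace of a product of positive semidefinite matrices) and makes the equality analysis a one-liner. One minor difference: the paper obtains $A^{1/2}P=PA^{1/2}$ from simultaneous diagonalizability, whereas you use that $A^{1/2}$ is a polynomial in $A$; both are valid.
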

\begin{proof}
By the spectral theorem, $A$ and $P$ are diagonalizable. Since they commute, they are simultaneously diagonalizable. Since $A$ has real, non-negative eigenvalues, there exists a positive semidefinite matrix $A^{1/2}\in\RR^{n\times n}$ such that $A^{1/2}A^{1/2}=A$ and $A^{1/2}P=PA^{1/2}$. Then $M:=(m_{ij})=A^{1/2}BA^{1/2}$ is also positive semidefinite. In particular $m_{ij}\le(m_{ii}+m_{jj})/2$ for $i,j\in\{1,\ldots,n\}$. If $\sigma$ denotes the permutation corresponding to $P$, then we obtain
\[\tr(ABP)=\tr(A^{1/2}BPA^{1/2})=\tr(MP)=\sum_{i=1}^nm_{i\sigma(i)}\le\sum_{i=1}^n\frac{m_{ii}+m_{\sigma(i)\sigma(i)}}{2}=\tr(M)=\tr(AB).\]
If $A$ and $B$ are positive definite, then so is $M$ and we have $m_{ij}<(m_{ii}+m_{jj})/2$ whenever $i\ne j$. This implies the last claim.
\end{proof}

\begin{Lem}\label{lemwn}
Let $W\in\RR^{n\times n}$ be integral positive definite and suppose that $W$ commutes with a permutation matrix $P$. Let
\[
W_m:=\frac{1}{2}\begin{pmatrix}
2W&-P W&&0\\
-P^\text{t}W&\ddots&\ddots\\
&\ddots&\ddots&-P W\\
0&&-P^\text{t}W&2W
\end{pmatrix}\in\RR^{mn\times mn}.
\]
Then $W_m$ is integral positive definite. In particular, $U_m\otimes W$ is integral positive definite.
\end{Lem}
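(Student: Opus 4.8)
The plan is to decompose a vector $x\in\RR^{mn}$ as $x=(x_1,\ldots,x_m)$ with $x_i\in\RR^n$ and to rewrite the quadratic form $xW_mx^{\text{t}}$ as an explicit sum of squares with respect to $W$, the whole computation resting on the hypothesis $PW=WP$ (which also gives $P^{\text{t}}W=P^{-1}W=WP^{-1}=WP^{\text{t}}$, so in particular $W_m$ is symmetric). Expanding the block-tridiagonal form and using that the scalar $x_{i+1}(P^{\text{t}}W)x_i^{\text{t}}$ equals its own transpose $x_i(WP)x_{i+1}^{\text{t}}=x_i(PW)x_{i+1}^{\text{t}}$ (since $W^{\text{t}}=W$ and $PW=WP$), one obtains
\[
xW_mx^{\text{t}}=\sum_{i=1}^mx_iWx_i^{\text{t}}-\sum_{i=1}^{m-1}x_i(PW)x_{i+1}^{\text{t}}.
\]
Then, expanding $(x_iP-x_{i+1})W(x_iP-x_{i+1})^{\text{t}}$ and simplifying with $PWP^{\text{t}}=W$, I would verify the identity
\[
xW_mx^{\text{t}}=\tfrac12x_1Wx_1^{\text{t}}+\tfrac12x_mWx_m^{\text{t}}+\tfrac12\sum_{i=1}^{m-1}(x_iP-x_{i+1})W(x_iP-x_{i+1})^{\text{t}}=\tfrac12\sum_{k=0}^mw_kWw_k^{\text{t}},
\]
where $w_0:=x_1$, $w_i:=x_iP-x_{i+1}$ for $1\le i\le m-1$ and $w_m:=x_m$.

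Now let $x\in\ZZ^{mn}\setminus\{0\}$ (for $m=1$ we have $W_1=W$ and there is nothing to prove, so assume $m\ge2$). I claim at least two of $w_0,\ldots,w_m$ are nonzero. If all were zero, then $x_1=0$ together with $x_{i+1}=x_iP$ would force $x=0$; and if exactly one were nonzero, invertibility of $P$ lets one propagate the vanishing relations to kill it as well — for instance, if only $w_j=x_jP-x_{j+1}\ne0$ with $1\le j\le m-1$, then $w_0=\ldots=w_{j-1}=0$ gives $x_1=\ldots=x_j=0$ and $w_m=w_{m-1}=\ldots=w_{j+1}=0$ gives $x_m=\ldots=x_{j+1}=0$, whence $w_j=0$, a contradiction; the cases where the exceptional index is $0$ or $m$ are analogous. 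By \autoref{lemsym}, $W$ is positive definite, hence $w_kWw_k^{\text{t}}\ge0$ for every $k$ and $w_kWw_k^{\text{t}}\ge1$ whenever $w_k\ne0$. Therefore $\sum_{k=0}^mw_kWw_k^{\text{t}}\ge2$, so $xW_mx^{\text{t}}\ge1$, which together with the symmetry of $W_m$ shows that $W_m$ is integral positive definite. Taking $P=1_n$ (which commutes with $W$) gives $W_m=U_m\otimes W$, yielding the final assertion.

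The routine parts are the two displayed algebraic identities; the only point needing a little care there is checking that every cross term collapses, but this is exactly what $PW=WP$ and $W^{\text{t}}=W$ deliver, so I do not expect a genuine obstacle. The one combinatorial ingredient worth isolating is the ``at least two of the $w_k$ are nonzero'' observation, which is the elementary case analysis sketched above and is where the invertibility of the permutation matrix $P$ is used.
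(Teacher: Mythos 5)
Your proof is correct and follows essentially the same route as the paper's: the same block decomposition of $xW_mx^{\text{t}}$ into $\frac12x_1Wx_1^{\text{t}}+\frac12x_mWx_m^{\text{t}}+\frac12\sum(x_iP-x_{i+1})W(x_iP-x_{i+1})^{\text{t}}$, followed by the observation that for $x\ne0$ at least two of the integer vectors $x_1$, $x_m$, $x_iP-x_{i+1}$ must be nonzero. Your write-up merely makes that final counting step more explicit (the paper compresses it into a one-line case sketch), so this is the same argument.
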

\begin{proof}
Let $x=(x_1,\ldots,x_m)$ with $x_i\in\ZZ^n$. Since $WP=PW$ we have
\begin{align*}
xW_mx^\text{t}&=\sum_{i=1}^mx_iWx_i^\text{t}-\sum_{i=1}^{m-1}x_iP Wx_{i+1}^\text{t}\\
&=\frac{1}{2}x_1Wx_1^\text{t}+\frac{1}{2}x_mWx_m^\text{t}+\frac{1}{2}\sum_{i=1}^{m-1}(x_iP-x_{i+1})W(x_iP-x_{i+1})^\text{t}.
\end{align*}
We may assume that $x_i\ne 0$ for some $i\in\{1,\ldots,m\}$. If $i=1$, then $x_m\ne 0$ or $x_jP\ne x_{j+1}$ for some $j$. In any case $xW_mx^\text{t}\ge 1$. If $i>1$, then the claim can be seen in a similar fashion. The last claim follows with $P=1_n$.
\end{proof}

Now assume the notation of Theorem~B. 
In addition, let $p$ be the characteristic of $B$ such that $q:=|\langle u\rangle|$ is a power of $p$. 
Let $k:=k(B)$, $l:=l(b)$ and $\zeta:=e^{2\pi i/q}\in\CC$. Then the generalized decomposition matrix $Q=(d_{\chi\phi}^u)$ of $B$ with respect to the subsection $(u,b)$ has size $k\times l$ and entries in $\ZZ[\zeta]$ (see \cite[Definition~1.19]{habil} for instance). 
By the orthogonality relations of generalized decomposition numbers, we have $Q^\text{t}\overline{Q}=qC$ where $qC$ is the Cartan matrix of $b$ (see \cite[Theorems~1.14 and 1.22]{habil}). Recall that $C$ is positive definite and has non-negative integer entries.

The first part of the next lemma is a result of Broué~\cite{BroueSanta} while the second part was known to Brauer~\cite[(5H)]{BrauerBlSec2}.

\begin{Lem}[{\cite[Proposition~1.36]{habil}}]\label{padic}
Let $d_\chi$ be a row of $Q$ corresponding to a character $\chi\in\Irr(B)$ of height $0$. Let $d$ be the defect of $\overline{b}$ and let $\widetilde{C}:=p^dC^{-1}\in\ZZ^{l\times l}$. Then the $p$-adic valuation of $d_\chi\widetilde{C}\overline{d_\chi}^\textnormal{t}$ is $0$. In particular, $d_\chi\ne 0$. 
Now assume that $u\in\Z(D)$ and $\chi\in\Irr(B)$ is arbitrary. Then $d_\chi\ne 0$.
\end{Lem}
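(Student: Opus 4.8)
I would establish the two logically independent assertions of the lemma separately.

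\emph{The first (height-zero) claim.} Write $C_u=qC$ for the Cartan matrix of $b$, so the orthogonality relations give $Q^{\text{t}}\overline Q=C_u$ and, after complex conjugation, also $\overline Q^{\text{t}}Q=C_u$. The plan is to study the \emph{contribution matrix}
\[
M:=Q\widetilde C\,\overline Q^{\text{t}}=p^{d}q\cdot QC_u^{-1}\overline Q^{\text{t}}\in\CC^{k\times k},\qquad k:=k(B),
\]
whose diagonal entries $M_{\chi\chi}=d_\chi\widetilde C\,\overline{d_\chi}^{\text{t}}$ are exactly the numbers to be controlled. Since $C_u$ is real symmetric and $Q^{\text{t}}\overline Q=\overline Q^{\text{t}}Q=C_u$, the matrix $\frac1{p^{d}q}M$ is a Hermitian idempotent of rank $\rk Q=l$, namely the orthogonal projection of $\CC^{k}$ onto the column space of $Q$; in particular each $M_{\chi\chi}$ is a real number in $[0,p^{d}q]$. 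At the same time $\widetilde C\in\ZZ^{l\times l}$ and $Q$ has entries in $\ZZ[\zeta]$, so $M_{\chi\chi}$ is a totally real algebraic integer of $\ZZ[\zeta]$ and has non-negative valuation $\nu_p$ at the unique prime $\mathfrak p$ of $\ZZ[\zeta]$ above $p$ (here $\QQ(\zeta)/\QQ$ is totally ramified, $\mathfrak p=(1-\zeta)$, residue field $\FF_p$). Hence the assertion $\nu_p(M_{\chi\chi})=0$ --- which includes $d_\chi\neq 0$ --- is equivalent to $M_{\chi\chi}\not\equiv 0\pmod{\mathfrak p}$, and I would pass to residues.

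Reducing modulo $\mathfrak p$ and using $\zeta\equiv 1$ (so $d_\chi$ and $\overline{d_\chi}$ acquire a common image, a row vector over $\FF_p$ which I still write $\overline{d_\chi}$), one gets $M_{\chi\chi}\equiv\overline{d_\chi}\,\overline{\widetilde C}\,\overline{d_\chi}^{\text{t}}\pmod{\mathfrak p}$. The decisive structural point is that $\overline{\widetilde C}\in\FF_p^{l\times l}$ is symmetric \emph{of rank exactly $1$}: its rank is the number of elementary divisors of $\widetilde C=p^{d}C^{-1}$ prime to $p$, i.e.\ the multiplicity of $p^{d}$ among the elementary divisors of $C$, and that multiplicity is $1$ because the largest elementary divisor $p^{d}$ of the Cartan matrix of a block of defect $d$ occurs exactly once (a standard fact from the theory of lower defect groups; for $l=1$ it is immediate from $C=(p^{d})$). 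A symmetric rank-$1$ matrix over $\FF_p$ has the shape $\lambda\,w^{\text{t}}w$ with $0\neq w\in\FF_p^{l}$ and $0\neq\lambda\in\FF_p$, whence $M_{\chi\chi}\equiv\lambda\,(\overline{d_\chi}\,w^{\text{t}})^{2}\pmod{\mathfrak p}$. The problem is thereby reduced to showing that the reduction of a height-$0$ row $d_\chi$ is not orthogonal to $w$, i.e.\ to the line $\operatorname{im}\overline{\widetilde C}=\langle w\rangle$.

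This last point carries the real content of the theorem. For $u=1$ it is essentially classical: then $\mathfrak p=(p)$, $Q$ is the ordinary decomposition matrix, and $w$ may be taken as the reduction of the vector $\bigl(\phi(1)\,p^{-(a-d)}\bigr)_{\phi\in\IBr(B)}$, where $a:=\nu_p|G|$. Indeed, from $\widetilde C C=p^{d}1_l$ and $\Phi_\phi(1)=\sum_\psi c_{\phi\psi}\psi(1)$ one gets $\widetilde C\,(\Phi_\phi(1))_\phi^{\text{t}}=p^{d}(\phi(1))_\phi^{\text{t}}$; dividing by $p^{a}$ (legitimate, since projectives have dimension divisible by $|G|_p$ and Brauer degrees are divisible by $p^{a-d}$) and reducing modulo $p$ places the reduction of $(\phi(1)p^{-(a-d)})_\phi$ inside $\operatorname{im}\overline{\widetilde C}$, and it is nonzero because a block of defect $d$ has Brauer characters of defect exactly $d$. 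Then $\chi(1)=\sum_\phi d_{\chi\phi}\phi(1)$ gives $\overline{d_\chi}\,w^{\text{t}}=\chi(1)\,p^{-(a-d)}\bmod p$, which is nonzero precisely when $\nu_p(\chi(1))=a-d$, i.e.\ precisely when $\chi$ has height $0$. For general $u$ I would reduce to the case $u=1$ inside $H:=\C_G(u)$ via Brauer's Second Main Theorem: $d^u_{\chi\phi}$ is obtained from the ordinary decomposition numbers of $b$ in $H$ by composition with the ``$u$-twist'' $\psi\mapsto(y\mapsto\chi(uy))$, and one tracks $\nu_p$ through this (Gauss-sum type) coordinate change, using that the defect vector of $\overline b$ is the image of that of $b$ under passage to the quotient $H/\langle u\rangle$ and that a height-$0$ character of $B$ has the matching $p$-defect behaviour over $H$. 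I expect this transfer --- reconciling the Second Main Theorem with the coordinate change $\zeta\mapsto$ roots of unity while preserving $\mathfrak p$-adic valuations, and in particular handling the quotient by $\langle u\rangle$ --- to be the main obstacle; it is precisely Broué's theorem, and one may instead cite his module-theoretic proof via $p$-permutation modules and the Brauer construction.

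\emph{The second claim.} Now $u\in\Z(D)$ and $\chi\in\Irr(B)$ is arbitrary; this is Brauer's $(5H)$. If $\chi$ has height $0$ it follows from the first part ($M_{\chi\chi}\neq 0$). In general, since $u$ is central in a defect group of $B$ it is central in a defect group of $b$, so $u$ cannot be invisible to $\chi$: summing the Second Main Theorem identity $\chi(uy)=\sum_\phi d^u_{\chi\phi}\phi(y)$ over a suitable family of $p$-regular $y\in\C_G(u)$ --- using that $y\mapsto\chi(uy)$ is a nonzero Brauer character of $\C_G(u)$, which holds because $u$ lies in a defect group of the block of $\C_G(u)$ seen by $\chi$ --- yields a nonzero scalar multiple of $\chi(1)$, forcing $d_\chi\neq 0$.
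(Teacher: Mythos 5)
The paper does not prove this lemma; it cites it, attributing the height-zero statement to Brou\'e~\cite{BroueSanta}, the last statement to Brauer~\cite[(5H)]{BrauerBlSec2}, and both to \cite[Proposition~1.36]{habil}. Your proposal therefore does more than the paper: for $u=1$ you give a correct, self-contained proof, and it is an attractive one. The chain
``$\overline{\widetilde C}$ has $\FF_p$-rank $1$ because $p^d$ is a simple elementary divisor of $C$''
$\Rightarrow$
``$\overline{\widetilde C}=\lambda\,w^{\text t}w$ with $w$ proportional to the reduction of $(\phi(1)p^{d-a})_\phi$, read off from $\widetilde C\Phi^{\text t}=p^d\varphi^{\text t}$''
$\Rightarrow$
``$\overline{d_\chi}\,w^{\text t}\equiv\chi(1)p^{d-a}$, nonzero mod $p$ iff $\chi$ has height $0$''
is correct in every step, including the rank-one factorisation over $\FF_2$ (a rank-one symmetric matrix over any field is $\lambda w^{\text t}w$; your brief argument works).

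For $u\ne 1$ --- which is the case actually used throughout the paper --- the proof has a genuine gap, which you yourself flag. The reduction ``inside $\C_G(u)$ via the Second Main Theorem'' is not a change of basic set: writing $\chi|_{\C_G(u)}=\sum_\psi a_\psi\psi$ and using that $u\in\Z(\C_G(u))$, one gets $d^u_\chi=\sum_{\psi\in\Irr(b)}a_\psi\,\omega_\psi(u)\,d_\psi$, a $\ZZ[\zeta]$-linear combination of rows of the \emph{ordinary} decomposition matrix of $b$ with cyclotomic scalars $\omega_\psi(u)$. The quantity $d^u_\chi\widetilde C\,\overline{d^u_\chi}^{\text t}$ then has cross terms $a_\psi\overline{a_{\psi'}}\omega_\psi(u)\overline{\omega_{\psi'}(u)}\,d_\psi\widetilde C d_{\psi'}^{\text t}$, and controlling the $(1-\zeta)$-adic valuation of this mixed sum is precisely the content of Brou\'e's theorem; it does not follow from the $u=1$ case by a formal substitution. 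Citing Brou\'e here is of course legitimate (the paper does the same), but then you are not giving an independent proof of the statement as it is actually used.

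There is also an unflagged gap in your treatment of the last claim. The Second Main Theorem identity $\chi(uy)=\sum_{b'}\sum_{\phi\in\IBr(b')}d^{u}_{\chi\phi}\phi(y)$ runs over \emph{all} Brauer correspondents $b'$ of $B$ in $\C_G(u)$. Showing that the class function $y\mapsto\chi(uy)$ is nonzero on $p$-regular elements only yields $d^{(u,b')}_\chi\ne0$ for \emph{some} $b'$, not for the fixed $b$ in the lemma. Separating out the $b$-summand, or exploiting that all major subsections $(u,b')$ are $\N_G(D)$-conjugate and then using the Galois/contribution identities, is exactly what Brauer's $(5\mathrm{H})$ argument does; your sketch does not recover this. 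So for the general $u$ height-zero claim and for the $(5\mathrm{H})$ claim, the proposal should simply cite Brou\'e and Brauer (as the paper does) rather than suggest a reduction that is not a reduction.
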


We identify the Galois group $\mathcal{G}:=\Gal(\QQ(\zeta)|\QQ)$ with $\Aut(\langle u\rangle)\cong(\ZZ/q\ZZ)^\times$ such that $\gamma(\zeta)=\zeta^\gamma$ for $\gamma\in\mathcal{G}$. In this way we regard $\mathcal{N}$ as a subgroup of $\mathcal{G}$.
Let $n:=|\CN|$.
For any $\gamma\in\mathcal{G}$, $\gamma(Q)$ is the generalized decomposition matrix with respect to $(u^\gamma,b)$. If the subsections $(u,b)$ and $(u^\gamma,b)$ are not conjugate in $G$, then $\gamma\notin\mathcal{N}$ and $\gamma(Q)^\text{t}\overline{Q}=0$. On the other hand, if they are conjugate, then $\gamma\in\mathcal{N}$ and 
\begin{equation}\label{relation}
\gamma(d_{\chi\phi}^u)=d^{u^\gamma}_{\chi\phi}=d^u_{\chi\phi^\gamma}
\end{equation}
for $\chi\in\Irr(B)$ and $\phi\in\IBr(b)$. Hence, in this case, $\gamma$ acts on the columns of $Q$ and there exists a permutation matrix $P_\gamma$ such that $\gamma(Q)=QP_\gamma$. Recall that permutation matrices are orthogonal, i.\,e. $P_{\gamma^{-1}}=P_\gamma^{-1}=P_\gamma^\text{t}$. 
Since $\mathcal{G}$ is abelian, we obtain
\begin{equation}\label{CPPC}
CP_\gamma=Q^\text{t}\gamma(\overline{Q})=\gamma^{-1}(Q)^\text{t}\overline{Q}=P_{\gamma^{-1}}^\text{t}C=P_\gamma C
\end{equation}
for every $\gamma\in\CN$ and
\begin{equation}\label{fourier}
\gamma(Q)^\text{t}\overline{\delta(Q)}=\begin{cases}
CP_{\gamma^{-1}\delta}&\text{if }\gamma\equiv\delta\pmod{\mathcal{N}}\\
0&\text{otherwise}
\end{cases}
\end{equation}
for $\gamma,\delta\in\mathcal{G}$. For any subset $\CS\subseteq\CN$ we write $P_\CS:=\sum_{\delta\in\CS}P_\delta$.

\begin{Lem}\label{WPPW}
In the situation of Theorem~B we may assume that $W$ is (integral) positive definite and commutes with $P_\gamma$ for every $\gamma\in\CN$.
\end{Lem}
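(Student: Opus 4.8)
The plan is to replace the given $W$ by an averaged version that is invariant under the action of $\CN$, without increasing (in fact, without changing) the relevant trace $\tr(WC)$. Concretely, I would set
\[
\widetilde{W}:=\frac{1}{n}\sum_{\gamma\in\CN}P_\gamma^\text{t}WP_\gamma,
\]
and verify that $\widetilde{W}$ inherits all the required properties and serves just as well in Theorem~B.

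First I would check that $\widetilde{W}$ is still integral positive definite. For any $x\in\ZZ^l\setminus\{0\}$ and any $\gamma\in\CN$, the vector $xP_\gamma^\text{t}$ is again a nonzero integer vector (permutation matrices preserve $\ZZ^l\setminus\{0\}$), so $(xP_\gamma^\text{t})W(xP_\gamma^\text{t})^\text{t}\ge 1$ by hypothesis on $W$; averaging over $\gamma\in\CN$ gives $x\widetilde{W}x^\text{t}\ge 1$. By \autoref{lemsym}, $\widetilde{W}$ is then positive definite in the usual sense (and it is visibly symmetric, being an average of the symmetric matrices $P_\gamma^\text{t}WP_\gamma$). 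Next, $\widetilde{W}$ commutes with every $P_\delta$, $\delta\in\CN$: since $\CN$ is abelian and $P_{\gamma\delta}=P_\gamma P_\delta$, conjugation by $P_\delta$ merely permutes the summands, so $P_\delta^\text{t}\widetilde{W}P_\delta=\frac{1}{n}\sum_{\gamma}P_{\gamma\delta}^\text{t}WP_{\gamma\delta}=\widetilde{W}$, i.e. $\widetilde{W}P_\delta=P_\delta\widetilde{W}$.

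It remains to see that replacing $W$ by $\widetilde{W}$ does not weaken the conclusion, i.e. that $\tr(\widetilde{W}C)=\tr(WC)$. Here I use \eqref{CPPC}, which says $CP_\gamma=P_\gamma C$ for $\gamma\in\CN$, together with the fact that $P_\gamma$ is orthogonal. Then for each $\gamma$,
\[
\tr\bigl(P_\gamma^\text{t}WP_\gamma C\bigr)=\tr\bigl(P_\gamma^\text{t}WP_\gamma C\bigr)=\tr\bigl(WP_\gamma CP_\gamma^\text{t}\bigr)=\tr\bigl(WCP_\gamma P_\gamma^\text{t}\bigr)=\tr(WC),
\]
using cyclicity of the trace for the middle step and $P_\gamma CP_\gamma^\text{t}=CP_\gamma P_\gamma^\text{t}=C$ at the end. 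Averaging over $\gamma\in\CN$ yields $\tr(\widetilde{W}C)=\tr(WC)$, so the bounds in Theorem~B are literally unchanged. Since the extra assertion of Theorem~B (strictness when $\CN$ acts non-trivially on $\IBr(b)$) depends only on $C$, $\CN$, $\langle u\rangle$ and the conclusion of the forthcoming argument, we may indeed assume from the start that $W$ is integral positive definite and commutes with all $P_\gamma$, $\gamma\in\CN$.

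I expect the only point needing mild care to be the verification that the symmetrization genuinely leaves $\tr(WC)$ fixed rather than merely bounding it: this is where the commutation relation \eqref{CPPC} between $C$ and the $P_\gamma$ is essential, and it is precisely this relation — a consequence of $\CG$ being abelian — that makes the averaging harmless. Everything else is routine.
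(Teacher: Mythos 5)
Your averaging scheme is the same as the paper's, and the verifications of the quadratic-form bound, the commutation with $P_\delta$, and the trace identity $\tr(\widetilde{W}C)=\tr(WC)$ via \eqref{CPPC} are all correct. However, there is one genuine gap: you assert that $\widetilde{W}$ "is visibly symmetric, being an average of the symmetric matrices $P_\gamma^\text{t}WP_\gamma$." But Theorem~B only assumes $xWx^\text{t}\ge 1$ for integer $x$; it does \emph{not} assume $W$ is symmetric, and without that, $P_\gamma^\text{t}WP_\gamma$ is not symmetric either. Since the definition of "integral positive definite" in this paper explicitly requires symmetry (and \autoref{lemsym}, which you invoke, is stated for such symmetric matrices), you must first symmetrize.

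The fix is easy and is exactly what the paper does: average $W+W^\text{t}$ rather than $W$, i.e. set
\[
\CW:=\frac{1}{2n}\sum_{\delta\in\CN}P_\delta(W+W^\text{t})P_\delta^\text{t}.
\]
Since $x(W+W^\text{t})x^\text{t}=2\,xWx^\text{t}$, the quadratic-form hypothesis is preserved, $\CW$ is manifestly symmetric, and the remaining verifications go through unchanged. Apart from this missing symmetrization step, your proposal is the paper's proof.
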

\begin{proof}
Let 
\[\CW:=\frac{1}{2n}\sum_{\delta\in\CN}P_\delta(W+W^\text{t})P_\delta^\text{t}.\] 
Then $\CW$ is symmetric and commutes with $P_\delta$ for every $\delta\in\CN$. Moreover, $\CW$ is integral positive definite and by \autoref{lemsym}, $\CW$ is positive definite. Finally, 
\[\tr(\CW C)=\frac{1}{2n}\sum_{\delta\in\CN}\tr(P_\delta WCP_\delta^\text{t})+\tr(P_\delta W^\text{t}CP_\delta^\text{t})=\tr(WC),\]
since $P_\delta$ commutes with $C$. Hence, we may replace $W$ by $\CW$.
\end{proof}

In the following we revisit some arguments from \cite[Section~5.2]{habil}.
Write $Q=\sum_{i=1}^{\phi(q)}A_i\zeta^i$ where $A_i\in\ZZ^{k\times l}$ for $i=1,\ldots,\phi(q)$ and $\phi(q)=q-q/p$ is Euler's function. Let \[\CA_q=\bigl(A_i:i=1,\ldots,\phi(q)\bigr)\in\ZZ^{k\times\phi(q)l}.\]

\begin{Lem}\label{rank}
The matrix $\CA_q$ has rank $l\phi(q)/n$.
\end{Lem}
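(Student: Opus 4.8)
The plan is to determine the rank of $\CA_q$ by relating it to the known orthogonality relations \eqref{fourier}. First I would observe that the columns of $Q$, viewed as vectors in $\CC^k$, are partitioned by the Galois action: the orbit sums and more precisely the Galois conjugates of columns span a space whose dimension we can compute. Concretely, the matrix $\bigl(\gamma(Q):\gamma\in\CG\bigr)\in\CC^{k\times\phi(q)l}$ has the same column space (over $\CC$) as $\CA_q$, because the change of basis between the powers $\zeta,\zeta^2,\ldots,\zeta^{\phi(q)}$ and the Galois conjugates $\{\gamma(\zeta):\gamma\in\CG\}$ is given by an invertible Vandermonde-type matrix (the two $\ZZ$-bases of $\ZZ[\zeta]$ differ by a $\GL(\phi(q),\QQ)$ transformation, or at worst one reduces to a cyclotomic-field computation). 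So $\rk\CA_q=\rk\bigl(\gamma(Q):\gamma\in\CG\bigr)$.

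Next I would compute this rank using \eqref{fourier}. Form the Gram matrix $M:=\bigl(\gamma(Q)^\text{t}\overline{\delta(Q)}\bigr)_{\gamma,\delta\in\CG}$, a block matrix of size $\phi(q)l\times\phi(q)l$ whose $(\gamma,\delta)$ block is $CP_{\gamma^{-1}\delta}$ when $\gamma\equiv\delta\pmod\CN$ and $0$ otherwise. The rank of the complex matrix $\bigl(\gamma(Q)\bigr)_\gamma$ equals the rank of $M$. Now $M$ decomposes as a direct sum over the $\phi(q)/n$ cosets of $\CN$ in $\CG$: for each coset $\CG/\CN$ we get an $nl\times nl$ block of the form $\bigl(CP_{\gamma^{-1}\delta}\bigr)_{\gamma,\delta\in\CN}$, which (after reindexing) is the matrix $P_\CN\otimes$-type object $\sum_{\epsilon\in\CN}(\text{shift by }\epsilon)\otimes CP_\epsilon$. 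Since $C$ is positive definite (invertible) and the $P_\epsilon$ are orthogonal, this block equals $(C\otimes 1_n)\cdot\bigl(P_{\gamma^{-1}\delta}\bigr)_{\gamma,\delta}$ up to the invertible factor $C\otimes 1_n$, so its rank equals that of the $nl\times nl$ permutation-type matrix $R:=\bigl(P_{\gamma^{-1}\delta}\bigr)_{\gamma,\delta\in\CN}$. One checks $R^2=nR$ (using $P_{\gamma}P_{\delta}=P_{\gamma\delta}$ and summing over $\CN$), so $\frac1n R$ is an idempotent, hence $\rk R=\frac1n\tr R=\frac1n\cdot n\cdot\tr(P_1)/\!\!\cdots$; more carefully, $\frac1n R$ is the projection onto the $\CN$-fixed part of $\CC^l\otimes\CC^{\CN}$ under the diagonal action, whose dimension is $\dim(\CC^l)^\CN\cdot$(number of... )—the cleanest statement is $\rk R = \dim_\CC(\CC^{l})^{\CN}$ once one identifies the action, but since $\CN$ permutes the $l$ Brauer characters, the fixed space of the permutation action on $\CC^{\CN}\otimes\CC^l$ has dimension exactly $l$. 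Thus each coset-block has rank $l$, and summing over the $\phi(q)/n$ cosets gives $\rk M=l\phi(q)/n$, hence $\rk\CA_q=l\phi(q)/n$.

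The main obstacle I anticipate is the bookkeeping in the block-matrix rank computation—in particular making precise that passing between the $\zeta$-power basis and the Galois basis of $\ZZ[\zeta]$ is a complex-linear isomorphism on column spaces (so that ranks agree), and correctly identifying the idempotent $\frac1n R$ and its trace. One must be careful that $\CN$ may act non-trivially on $\IBr(b)$, so the relevant fixed space is genuinely the $\CN$-invariants of the permutation module, and one needs $\gamma\equiv\delta\pmod\CN \iff \gamma\CN=\delta\CN$ together with \eqref{CPPC} to see the blocks commute with $C$ and hence the factor $C$ drops out of the rank. A cleaner alternative, which I would actually write up, is to argue directly: the row space of $\CA_q$ over $\QQ$ has dimension $\rk\CA_q$, and $\CA_q\CA_q^\text{t}$ has the same rank; but $\CA_q\CA_q^\text{t}=\sum_i A_iA_i^\text{t}$, and using $Q\overline{Q}^{\,\text{t}}$-type relations one expresses this in terms of $\sum_{\gamma\in\CG}\gamma(Q)\overline{\gamma(Q)}^{\,\text{t}}$ divided by an integer, whose rank is controlled by \eqref{fourier} exactly as above. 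Either way the heart of the matter is \eqref{fourier} plus the idempotent trick, and the final count is $l\cdot|\CG:\CN| = l\phi(q)/n$.
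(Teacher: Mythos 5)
Your proposal is correct and shares the crucial first step with the paper: both reduce $\rk\CA_q$ to $\rk\bigl(\gamma(Q):\gamma\in\CG\bigr)$ via the invertible Vandermonde factor $V\otimes 1_l$. Where you diverge is the computation of that rank. The paper argues geometrically and more briefly: for $\gamma\in\CN$ one has $\gamma(Q)=QP_\gamma$, so all $\gamma(Q)$ with $\gamma$ in a fixed $\CN$-coset share the same $l$-dimensional column space (here full rank of $Q$ is used), and by \eqref{fourier} the column spaces belonging to distinct cosets are orthogonal; summing over the $|\CG:\CN|=\phi(q)/n$ cosets gives $l\phi(q)/n$. You instead pass to the Gram matrix $M$, observe it is block-diagonal with one $nl\times nl$ block per coset, factor each block as $(1_n\otimes C)R$ with $R=(P_{\gamma^{-1}\delta})_{\gamma,\delta\in\CN}$, verify $R^2=nR$, and conclude $\rk R=\tfrac1n\tr R=l$. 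Both routes are valid; yours makes the rank count entirely computational at the cost of introducing the Gram matrix, and would be the better choice if one mistrusted the terse orthogonality-of-subspaces argument. One stylistic note: the detour identifying $\tfrac1n R$ with the projection onto $\CN$-invariants of $\CC^\CN\otimes\CC^l$ is true but unnecessary — the idempotent trace $\tr R=nl$ already gives $\rk R=l$ directly — so I would prune it and keep only $R^2=nR$ plus the trace count.
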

\begin{proof}
It is well-known that the Vandermonde matrix $V:=(\zeta^{i\gamma}:1\le i\le\phi(q),\gamma\in\CG)$ is invertible. 
Since $Q$ has full rank, the facts stated above show that $(\gamma(Q):\gamma\in\CG)$ has rank $l|\CG:\CN|=l\phi(q)/n$. Then also  $\CA_q=(\gamma(Q):\gamma\in\CG)(V\otimes 1_l)^{-1}$ has rank $l\phi(q)/n$. 
\end{proof}

Let $T_q$ be the trace of $\QQ(\zeta)$ with respect to $\QQ$. Recall that
\[T_q(\zeta^i)=\begin{cases}
\phi(q)&\text{if }q\mid i,\\
-q/p&\text{if }q\nmid i\text{ and } \frac{q}{p}\mid i,\\
0&\text{otherwise}.
\end{cases}\]
Hence,
\[T_q(Q\zeta^{-i})=\sum_{j=1}^{\phi(q)}A_jT_q(\zeta^{j-i})=\frac{q}{p}\Bigl(pA_i-\sum_{j\equiv i\pmod{q/p}}A_j\Bigr).\]

\begin{Def}
For $1\le i\le\phi(q)$ there is a unique integer $i'$ such that $0\le i'<q/p$ and $i'\equiv-i\pmod{q/p}$.
\end{Def}

Then $q/p\le i+i'\le\phi(q)$ and $\sum_{j\equiv i\pmod{q/p}}\zeta^{-j}=-\zeta^{i'}$ where we consider only those summands with $1\le j\le\phi(q)$. 
With this convention we obtain
\begin{align*}
T_q\bigl(Q(\zeta^{-i}-\zeta^{i'})\bigr)&=\frac{q}{p}\Bigl(pA_i-\sum_{j\equiv i\pmod{q/p}}A_j+\sum_{j\equiv i\pmod{q/p}}\bigl(pA_j-\sum_{s\equiv j\pmod{q/p}}A_s\bigr)\Bigr)\\
&=\frac{q}{p}\Bigl(pA_i+(p-1)\sum_{j\equiv i\pmod{q/p}}A_j-(p-1)\sum_{s\equiv i\pmod{q/p}}A_s\Bigr)=qA_i
\end{align*}
and \eqref{fourier} yields
\begin{align*}
q^2A_i^\text{t}A_j&=\sum_{\gamma,\delta\in\mathcal{G}}(\zeta^{-i\gamma}-\zeta^{i'\gamma})(\zeta^{j\delta}-\zeta^{-j'\delta})\gamma(Q)^\text{t}\delta(\overline{Q})\\
&=\sum_{\delta\in\mathcal{N}}\sum_{\gamma\in\mathcal{G}}(\zeta^{-i\gamma}-\zeta^{i'\gamma})(\zeta^{j\gamma\delta}-\zeta^{-j'\gamma\delta})qCP_{\delta}\\
&=qC\sum_{\delta\in\mathcal{N}}P_{\delta}T_q\bigl(\zeta^{j\delta-i}-\zeta^{j\delta+i'}-\zeta^{-j'\delta-i}+\zeta^{-j'\delta+i'}\bigr)
\end{align*}
for $1\le i,j\le\phi(q)$. Note that
\[j\delta-i\equiv j\delta+i'\equiv -j'\delta-i\equiv -j'\delta+i'\pmod{q/p}.\]
Moreover, if $j\delta-i\equiv 0\pmod{q}$, then $j\delta-i'\not\equiv 0\pmod{p}$. In this case $T_q(\zeta^{j\delta-i}-\zeta^{j\delta+i'})=\phi(q)+q/p=q$. In a similar way we obtain
\begin{equation}\label{longeq}
\boxed{A_i^\text{t}A_j=C\sum_{\delta\in\CN}P_\delta\bigl([j\delta\equiv i]-[j\delta\equiv -i']+[j'\delta\equiv i']-[j'\delta\equiv-i]\bigr)}
\end{equation}
where all congruences are modulo $q$ and $[\ldots]$ denotes the indicator function. 

By \autoref{padic}, $\CA_q$ has non-zero rows $a_1,\ldots,a_{k_0(B)}$. If $\CW\in\RR^{l\phi(q)\times l\phi(q)}$ is integral positive definite, then
\[k_0(B)\le\sum_{i=1}^{k_0(B)}a_i\CW a_i^\text{t}\le\tr(\CA_q\CW\CA_q^\text{t})=\tr(\CW\CA_q^\text{t}\CA_q)\]
and this is what we are going to show.
We need to discuss the case $p=2$ separately.

\begin{Prop}\label{p2}
Theorem~B holds for $p=2$.
\end{Prop}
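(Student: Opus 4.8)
The plan is the following. By \autoref{WPPW} we may assume that $W$ is integral positive definite and commutes with every $P_\gamma$, $\gamma\in\CN$. Since $p=2$ we have $\phi(q)=q/2$, the index $i'$ equals $q/2-i$ (read modulo $q/2$), and every $\delta\in\CN$ is odd, so that $j'\delta\equiv q/2-j\delta\pmod q$. A short computation then collapses the four indicator terms in \eqref{longeq} into
\[A_i^\text{t}A_j=2C\sum_{\delta\in\CN}P_\delta\bigl([\,j\delta\equiv i\,]-[\,j\delta\equiv i-q/2\,]\bigr)\qquad(1\le i,j\le q/2),\]
all congruences being modulo $q$. In particular, summing over $i$ and $j$ gives $\sum_{i,j}A_i^\text{t}A_j=2C\sum_{\delta\in\CN}e_\delta P_\delta$, where $e_\delta:=\sum_{j=1}^{q/2}\epsilon(j\delta)$ and $\epsilon(m)=1$ if $m\bmod q\in\{1,\ldots,q/2\}$ and $\epsilon(m)=-1$ otherwise.

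The point special to $p=2$ is that $\ZZ[\zeta]$ has a unique prime $\mathfrak p$ above $2$ and that $\zeta\equiv 1\pmod{\mathfrak p}$. Hence, if $a=(a^{(1)},\ldots,a^{(q/2)})$ with $a^{(j)}\in\ZZ^l$ is the row of $\CA_q$ attached to a character $\chi\in\Irr(B)$ of height $0$, then reduction modulo $\mathfrak p$ in \autoref{padic} shows that $s\widetilde Cs^\text{t}$ is odd, where $s:=\sum_{j=1}^{q/2}a^{(j)}$ and $\widetilde C:=2^dC^{-1}$; in particular $s\ne 0$. Because of this, instead of an integral positive definite matrix it suffices to produce a \emph{positive semidefinite} matrix $\CW\in\RR^{(q/2)l\times(q/2)l}$ with the weaker property that $x\CW x^\text{t}\ge 1$ for every $x=(x^{(1)},\ldots,x^{(q/2)})\in\ZZ^{(q/2)l}$ for which $(\sum_j x^{(j)})\widetilde C(\sum_j x^{(j)})^\text{t}$ is odd: for then, with $a_1,\ldots,a_{k_0(B)}$ the rows of $\CA_q$ of height $0$ (non-zero by \autoref{padic}),
\[k_0(B)\le\sum_{i=1}^{k_0(B)}a_i\CW a_i^\text{t}\le\tr(\CA_q\CW\CA_q^\text{t})=\tr(\CW\CA_q^\text{t}\CA_q),\]
the last inequality because $\CW\ge 0$ makes the contributions of the (possibly non-zero) remaining rows harmless.

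When $\CN$ acts trivially on $\IBr(b)$ one takes $\CW=J\otimes W$ with $J:=1^{(q/2)\times(q/2)}$ the all-ones matrix: this is positive semidefinite, $x(J\otimes W)x^\text{t}=(\sum_j x^{(j)})W(\sum_j x^{(j)})^\text{t}\ge 1$ whenever $\sum_j x^{(j)}\ne 0$, and $\tr\bigl((J\otimes W)\CA_q^\text{t}\CA_q\bigr)=2\tr\bigl(WC\sum_\delta e_\delta P_\delta\bigr)=2\bigl(\sum_\delta e_\delta\bigr)\tr(WC)$. For general $\CN$ the coefficients $e_\delta$ with $\delta\ne 1$ need not be non-negative, so $J\otimes W$ is too large; instead one twists the blocks of $J\otimes W$ along the orbits of $\CN$ on $\{1,\ldots,q/2\}$ by the permutation matrices $P_\delta$. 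This is a matrix of the type appearing in \autoref{lemwn}, conjugated by a block permutation matrix; it is therefore still positive semidefinite (a compatible choice of signs along each orbit exists since a tree is balanced) and still satisfies the parity condition, and \autoref{lemposperm} bounds the occurring terms $\tr(WCP_\delta)$ by $\tr(WC)$ so that one again obtains $\tr(\CW\CA_q^\text{t}\CA_q)\le 2\bigl(\sum_{\delta\in\CN}e_\delta\bigr)\tr(WC)$. Finally, the elementary identity
\[2\sum_{\delta\in\CN}e_\delta=|\CN|\cdot\bigl|\langle u\rangle:[\langle u\rangle,\CN]\bigr|=k_0(\langle u\rangle\rtimes\CN)\]
(the second equality because $\langle u\rangle\rtimes\CN$ is a $2$-group with abelianisation $(\langle u\rangle/[\langle u\rangle,\CN])\times\CN$), together with the elementary estimate $k_0(\langle u\rangle\rtimes\CN)\le|\langle u\rangle|$ and the strictness clause of \autoref{lemposperm}, yields all assertions.

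The main obstacle is the case where $\CN$ acts non-trivially on $\IBr(b)$: one must define the permutation twist of $J\otimes W$ precisely enough (respecting the $\CN$-orbit structure of $\{1,\ldots,q/2\}$, with the correct signs, in the spirit of \autoref{lemwn}) that in the trace computation the negative-coefficient terms cancel against positive ones, while simultaneously preserving positive semidefiniteness and the parity condition extracted from \autoref{padic}. Verifying the combinatorial identity for $2\sum_{\delta\in\CN}e_\delta$ is a second, more routine, point.
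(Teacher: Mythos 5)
Your reduction of \eqref{longeq} to
\[A_i^\text{t}A_j=2C\sum_{\delta\in\CN}P_\delta\bigl([\,j\delta\equiv i\,]-[\,j\delta\equiv i+q/2\,]\bigr)\]
is correct (this is \eqref{short} in the paper), and your idea of exploiting the unique prime above $2$ in $\ZZ[\zeta]$ to replace ``integral positive definite'' by ``positive semidefinite plus a parity condition on $\sum_j x^{(j)}$'' is a genuine variant not used in the paper. Unfortunately the argument collapses at the combinatorial identity
\[2\sum_{\delta\in\CN}e_\delta=k_0\bigl(\langle u\rangle\rtimes\CN\bigr),\]
which you call elementary but which is simply false whenever $\CN$ contains an element of the form $-5^m$. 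Take $q=8$ and $\CN=\{1,3\}$. Then $e_1=4$ and $e_3=1-1+1+1=2$ (from $j\cdot 3\bmod 8=3,6,1,4$), so $2\sum_\delta e_\delta=12$; but $[\langle u\rangle,\CN]=\langle u^2\rangle$ has order $4$, so $k_0(\langle u\rangle\rtimes\CN)=|U:U'|=4$. Thus your bound $\tr(\CW\CA_q^\text{t}\CA_q)=2(\sum_\delta e_\delta)\tr(WC)=12\tr(WC)$ is strictly weaker than the $4\tr(WC)$ asserted by Theorem~B, and no choice of signs/twists in a matrix supported on the full $\CA_q$ via $J\otimes W$ can repair this, because the mismatch is in the coefficients $e_\delta$ themselves, not in the action on $\IBr(b)$.

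This is exactly why the paper's proof splits into two cases according to the structure of $\CN\le(\ZZ/q\ZZ)^\times\cong C_2\times C_{q/4}$. In Case~1 ($\CN\le\langle 5\rangle$) your identity does hold (there $2\sum_\delta e_\delta=q=k_0(U)$) and the paper uses $\CW=1_{q/2}\otimes W$, picking up only the diagonal blocks $\sum_i A_i^\text{t}A_i=qC$. In Case~2 ($-5^m\in\CN$), summing over all blocks is provably too generous; instead one must identify a single column block --- namely $A_{q/2}$ --- on which every height-zero row is already non-zero. This is extracted from \autoref{padic} by a careful mod-$2$ argument using the action of $\delta=-5^m$ on the blocks $A_i$ (it kills $A_{q/4}$ and pairs up the others), after which $A_{q/2}^\text{t}A_{q/2}=2CP_\CN$ and \autoref{lemposperm} give the bound $2n\tr(WC)=k_0(U)\tr(WC)$ directly with $\CW=W$. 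You identified the wrong place as the ``main obstacle'': the non-trivial $\IBr(b)$-action is handled cleanly by \autoref{lemposperm} in both cases, whereas Case~2 requires the localisation to $A_{q/2}$, which your proposal does not contain.
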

\begin{proof}
If $q\le 2$, then $Q=A_1=\CA_q$, $n=1$ and
\[k_0(B)\le \tr(WQ^\text{t}Q)=\tr(WqC)=q\tr(WC)=k_0(\langle u\rangle\rtimes\CN)\tr(WC).\] 
Hence, we will assume for the remainder of the proof that $q\ge 4$.
Then $i'=q/2-i$ for every $1\le i\le\phi(q)=q/2$. Hence, \eqref{longeq} simplifies to
\begin{equation}\label{short}
A_i^\text{t}A_j=2C\sum_{\delta\in\CN}P_\delta\bigl([j\delta\equiv i]-[j\delta\equiv i+q/2]\bigr).
\end{equation}
It is well-known that 
\[\CG=\langle -1+q\ZZ\rangle\times\langle 5+q\ZZ\rangle\cong C_2\times C_{q/4}.\] 
In particular, $\CN$ is a $2$-group and so is $U:=\langle u\rangle\rtimes\CN$. Therefore, $k_0(U)=|U:U'|$ where $U'$ denotes the commutator subgroup of $U$.

\textbf{Case~1:} $\CN=\langle 5^{2^m}+q\ZZ\rangle$ for some $m\ge 0$.\\
Then $q=|\CN|2^{m+2}=n2^{m+2}$ and $U'$ is generated by $u^{5^{2^m}-1}$. Since $5^{2^m}-1\equiv 2^{m+2}\pmod{2^{m+3}}$, we conclude that $|U'|=n$ and $k_0(U)=|U:U'|=q$. 

For any given $\delta\in\CN\setminus\{1\}$ both congruences $i\delta\equiv i\pmod{q}$ and $i\delta\equiv i+q/2\pmod{q}$ have solutions $i\in\{1,\ldots,q/2\}$. Moreover, the number of solutions is the same, since they both form residue classes modulo a common integer. On the other hand, $i\delta\equiv i+q/2\pmod{q}$ has no solution for $\delta=1$.
An application of \eqref{short} yields
\[\sum_{i=1}^{q/2}A_i^\text{t}A_i=2C\sum_{\delta\in\CN}P_\delta\sum_{i=1}^{q/2}[i\delta\equiv i]-[i\delta\equiv i+q/2]=qCP_1=qC.\]
The matrix $\CW:=1_{q/2}\otimes W$ is certainly integral positive definite. Moreover, 
\begin{equation}\label{52n}
k_0(B)\le\tr(\CW\CA_q^\text{t}\CA_q)=\tr\Bigl(\sum_{i=1}^{q/2}WA_i^\text{t}A_i\Bigr)=q\tr(WC)=k_0(U)\tr(WC).
\end{equation}
It remains to check when this bound is sharp. If $k_0(B)=\tr(\CW\CA_q^\text{t}\CA_q)$, then every row of $\CA_q$ vanishes in all but (possibly) one $A_i$. Moreover, characters of positive height vanish completely in $\CA_q$. 
By way of contradiction, suppose that $\CN$ acts non-trivially on $\IBr(b)$. Using \eqref{relation}, it follows that there exists a character $\chi\in\Irr(B)$ of height $0$ such that the corresponding row $d_\chi=a\zeta^i$ of $Q$ satisfies $aP_\delta=-a$ for some $\delta\in\CN$. We write $a=(\alpha_1,\ldots,\alpha_s,-\alpha_1,\ldots,-\alpha_s,0,\ldots,0)$ with non-zero $\alpha_1,\ldots,\alpha_s\in\ZZ$. With the notation of \autoref{padic} let $\widetilde{C}=(\widetilde{c}_{ij})$. By \eqref{CPPC}, we have $P_\delta\widetilde{C}=\widetilde{C}P_\delta$. Now \autoref{padic} leads to the contradiction
\[0\not\equiv d_\chi\widetilde{C}\overline{d_\chi}^\text{t}=a\widetilde{C}a^\text{t}\equiv\sum_{i=1}^s2\alpha_i^2\widetilde{c}_{ii}\equiv 0\pmod{2},\]
since the diagonal of $\widetilde{C}$ is constant on the orbits of $\CN$.
Therefore, equality in \eqref{52n} can only hold if $\CN$ acts trivially on $\IBr(b)$.

\textbf{Case~2:} $\delta:=-5^m+q\ZZ\in\CN\setminus\{1\}$ for some $m\ge 0$.\\
Since $1+5^m\equiv 2\pmod{4}$, we have $U'=\langle u^{1+5^m}\rangle=\langle u^2\rangle$ and $k_0(U)=|U:U'|=2n$. 
We show that every row of $A_{q/2}$ corresponding to a height $0$ character $\chi\in\Irr(B)$ is non-zero.
Let $d_\chi=\sum_{i=1}^{q/2}a_i\zeta^i$ be the corresponding row of $Q$ where $a_i$ is a row of $A_i$. Let $\nu$ be the $p$-adic valuation. By \autoref{padic}, 
\begin{align*}
0&=\nu(d_\chi\widetilde{C}\overline{d_\chi}^\text{t})=\nu\Bigl(\sum_{1\le i,j\le q/2}a_i\widetilde{C}a_j^\text{t}\zeta^{i-j}\Bigr)=\nu\Bigl(\sum_{i=1}^{q/2}a_i\widetilde{C}a_i^\text{t}\Bigr),
\end{align*}
i.\,e. 
\begin{equation}\label{nu}
\sum_{i=1}^{q/2}a_i\widetilde{C}a_i^\text{t}\equiv 1\pmod{2}.
\end{equation}
On the other hand, 
\[\sum_{i=1}^{q/2}a_iP_\delta \zeta^i=d_\chi P_\delta=\delta(d_\chi)=\sum_{i=1}^{q/2}a_i\zeta^{i\delta}.\]
Now $i\delta\equiv i\pmod{q}$ implies $-5^m\equiv\delta\equiv 1\pmod{q/\gcd(q,i)}$ and $i=q/2$. Similarly $i\delta\equiv i+q/2\pmod{q}$ implies $i=q/4$. Then $A_{q/4}P_\delta=-A_{q/4}$. As in Case~1, it follows that $a_{q/4}\widetilde{C}a_{q/4}^\text{t}\equiv 0\pmod{2}$.
For $i\notin\{q/2,q/4\}$ we have $A_iP_\delta=\pm A_j$ for some $j\in\{1,\ldots,q/2\}\setminus\{i\}$.
Then, using \eqref{CPPC},
\[a_j\widetilde{C}a_j^\text{t}=a_iP_\delta\widetilde{C}P_\delta^\text{t}a_i^\text{t}=a_i\widetilde{C}a_i^\text{t}.\]
Now \eqref{nu} yields $a_{q/2}\widetilde{C}a_{q/2}^\text{t}\equiv 1\pmod{2}$ and $a_{q/2}\ne 0$. Therefore, $A_{q/2}$ has non-zero rows for height $0$ characters. 

By \eqref{short}, $A_{d/2}^\text{t}A_{d/2}=2CP_\CN$ and \autoref{lemposperm} implies
\[k_0(B)\le\tr(WA_{d/2}^\text{t}A_{d/2})=2\tr(WCP_\CN)=2\sum_{\gamma\in\CN}\tr(WCP_\gamma)\le 2n\tr(WC)=k_0(U)\tr(WC)\]
with strict inequality if $\CN$ acts non-trivially on $\IBr(b)$.
\end{proof}

We are left with the case $p>2$. 
Here $\CG$ is cyclic and $\CN$ is uniquely determined by $n$. Let $n_p$ be the $p$-part of $n$ and $n_{p'}$ the $p'$-part. Then $n_p\mid \frac{q}{p}$ and $n_{p'}\mid p-1$.

\begin{Lem}\label{k0}
We have $k_0\bigl(\langle u\rangle\rtimes\CN\bigr)=n+\frac{q-n_p}{n_{p'}}$ for $p>2$.
\end{Lem}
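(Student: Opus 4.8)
The plan is to compute $k_0(U)$ for $U:=\langle u\rangle\rtimes\CN$ directly, where $\langle u\rangle$ is cyclic of order $q=p^a$ and $\CN\le\Aut(\langle u\rangle)\cong(\ZZ/q\ZZ)^\times$ is cyclic (since $p>2$) of order $n=n_pn_{p'}$ with $n_p\mid q/p$ and $n_{p'}\mid p-1$. Since $U$ is $p$-solvable (indeed metacyclic), the number of height $0$ characters in its unique block is governed by Clifford theory over the abelian normal subgroup $\langle u\rangle$: each linear character $\lambda\in\Irr(\langle u\rangle)$ is acted on by $\CN$, and $\Irr(U)$ is obtained by inducing extensions of representatives of the $\CN$-orbits. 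The height $0$ characters are exactly those $\chi\in\Irr(U)$ with $\chi(1)$ coprime to $p$, i.e. $p\nmid|\CN:\CN_\lambda|$ where $\CN_\lambda$ is the stabilizer; equivalently $|\CN:\CN_\lambda|$ divides $n_{p'}$, i.e. $n_p$ divides $|\CN_\lambda|$.

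The key step is to count, for each divisor pattern, the $\CN$-orbits on $\Irr(\langle u\rangle)$ whose stabilizer contains the unique subgroup of order $n_p$. Identify $\Irr(\langle u\rangle)$ with $\ZZ/q\ZZ$; then $\CN$ acts by multiplication and the stabilizer of $j\in\ZZ/q\ZZ$ is $\{\gamma\in\CN:(\gamma-1)j\equiv 0\}$. I would first count $\lambda$ fixed by all of $\CN$: writing $m:=|\{j\in\ZZ/q\ZZ:\gamma j=j\text{ for all }\gamma\in\CN\}|$, a short computation with the cyclic generator of $\CN$ shows this fixed set is the (unique) subgroup of $\ZZ/q\ZZ$ of order $q/e$ where $e$ is the smallest power of $p$ with $n_p\mid q/e$ — in fact these contribute the ``$n$'' part appropriately. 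Then for the remaining $\lambda$ with $p\nmid|\CN:\CN_\lambda|$ but $\CN_\lambda\ne\CN$, each orbit has size exactly $n_{p'}$ or a proper divisor thereof, and summing $1$ over all such orbits together with the fully-fixed ones gives, after collecting terms, the count $n+\frac{q-n_p}{n_{p'}}$. The cleanest bookkeeping is: the number of $j\in\ZZ/q\ZZ$ whose stabilizer contains the subgroup $\CN_0$ of order $n_p$ equals $q/n_p$ (these are the $j$ killed by $\gamma_0-1$ for a generator $\gamma_0$ of $\CN_0$, and $\nu_p(\gamma_0-1)$ works out so that exactly $q/n_p$ residues qualify); among these, $n_p$ of them are fixed by all of $\CN$ (forming a subgroup of order $n_p$, each its own orbit), and the remaining $q/n_p-n_p$ split into orbits of size exactly $n_{p'}$; hence $k_0(U)=n_p+\frac{q/n_p-n_p}{n_{p'}}+(\text{orbit count from the }j\text{ fixed by all }\CN\text{ but }\ldots)$. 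Reconciling this with $n=n_pn_{p'}$ gives the stated formula; I would streamline by computing $k_0(U)$ as the number of $\CN$-orbits on the fixed points of $\CN_0$ on $\Irr(\langle u\rangle)$, which is $n_p+\frac{q/n_p-n_p}{n_{p'}}=\frac{n_p^2n_{p'}+q-n_p^2}{n_p n_{p'}}$... and then observe this simplifies only after also accounting for the orbits of size $<n_{p'}$, which is where care is needed.

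The main obstacle I expect is precisely the orbit-size bookkeeping: one must verify that every $\CN$-orbit on the relevant $\lambda$ (those of $p'$-degree) has size exactly $n_{p'}$ or that the defective orbits contribute exactly the correction making the total equal $n+\frac{q-n_p}{n_{p'}}$. This hinges on the structure of $(\ZZ/q\ZZ)^\times$ for $p>2$ being cyclic, so that $\CN$ has a unique subgroup of each order and the $p$-part $\CN_0$ acts with a clean fixed-point count ($\gamma_0-1$ has $p$-adic valuation exactly $a-\log_p n_p$, giving $q/n_p$ fixed residues), while the $p'$-part acts fixed-point-freely on the nontrivial characters among those. Once these two valuation facts are pinned down, the computation is routine: $k_0(U)$ equals the number of $\CN$-orbits among the $q/n_p$ characters fixed by $\CN_0$, namely the $n_p$ characters fixed by all of $\CN$ each contributing $1$, plus $\frac{q/n_p - n_p}{n_{p'}}$ more — and rewriting $n_p+\frac{q/n_p-n_p}{n_{p'}}$ over the common denominator $n=n_pn_{p'}$ and simplifying yields $n+\frac{q-n_p}{n_{p'}}$. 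Alternatively, and perhaps more transparently, I would invoke that for the semidirect product of a cyclic $p$-group by a cyclic $p'\times p$ group the character degrees are explicit, reducing everything to an elementary count of fixed points of cyclic linear actions on $\ZZ/q\ZZ$.
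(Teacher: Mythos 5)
Your framework is right — Clifford theory over the cyclic normal subgroup $\langle u\rangle$, with the height-zero condition equivalent to $\CN_p\le\CN_\lambda$, i.e.\ $\lambda^{q/n_p}=1$ — and this is exactly the paper's approach. But the final bookkeeping has two genuine errors, and the claimed algebraic identity used to rescue it is false.

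First, $k_0(U)$ is not the number of $\CN$-orbits. Over an orbit of a character $\lambda$, Clifford theory produces $|\Irr(\CN_\lambda)|=|\CN_\lambda|$ irreducible characters of $U$ (the inertia subgroup $\langle u\rangle\rtimes\CN_\lambda$ is metacyclic, so $\lambda$ extends in $|\CN_\lambda|$ ways, and each extension induces irreducibly, with degree $|\CN:\CN_\lambda|$ coprime to $p$). So the trivial orbit $\lambda=1$ contributes $n$ linear characters, and every nontrivial orbit with $\CN_\lambda=\CN_p$ contributes $n_p$ characters — not $1$. Second, the fixed-point counts are off: for $p>2$ the group $\CN_{p'}$ has order dividing $p-1$, so $\gamma-1$ is a unit mod $p$ (hence mod $q$) for any $1\ne\gamma\in\CN_{p'}$, and $\CN_{p'}$ acts fixed-point-freely on the nontrivial characters. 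Thus only \emph{one} character (the trivial one) is fixed by all of $\CN$, not $n_p$ of them, and among the $q/n_p$ characters fixed by $\CN_p$ the nontrivial ones fall into exactly $\frac{q/n_p-1}{n_{p'}}$ orbits, each with stabilizer exactly $\CN_p$. Finally, the identity you assert, $n_p+\frac{q/n_p-n_p}{n_{p'}}=n+\frac{q-n_p}{n_{p'}}$, is simply false (take $q=p^2$, $n_p=p$, $n_{p'}=1$: the left side is $p$, the right side is $p^2$). With both corrections the count becomes $k_0(U)=n+n_p\cdot\frac{q/n_p-1}{n_{p'}}=n+\frac{q-n_p}{n_{p'}}$, which is the statement; this is precisely how the paper does it.
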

\begin{proof}
The inflations from $\mathcal{N}$ yield $n$ linear characters in $U:=\langle u\rangle\rtimes\mathcal{N}$, since $\mathcal{N}$ is cyclic. Now let $1\ne\lambda\in\Irr(\langle u\rangle)$. If the orbit size of $\lambda$ under $\mathcal{N}$ is divisible by $p$, then the irreducible characters of $U$ lying over $\lambda$ all have positive height. Hence, we may assume that $\lambda^{q/n_p}=1$. 
Then, by Clifford theory, $\lambda$ extends in $n_p$ many ways to $\langle u\rangle\rtimes\mathcal{N}_p$ where $\mathcal{N}_p$ is the Sylow $p$-subgroup of $\mathcal{N}$. All these extensions induce to irreducible characters of $U$ of height $0$. We have $\frac{q/n_p-1}{n_{p'}}$ choices for $\lambda$. Thus, in total we obtain 
\[k_0(U)=n+n_p\frac{q/n_p-1}{n_{p'}}=n+\frac{q-n_p}{n_{p'}}.\qedhere\]
\end{proof}

The following settles Theorem~B in the special case $n_p=1$ (use \autoref{k0}).

\begin{Prop}\label{prop}
Let $p>2$ and $n_p=1$.
With the notation above there exists an integral positive definite matrix $\CW\in\RR^{\phi(q)l\times \phi(q)l}$ such that 
\[\tr(\CW\CA_q^\text{t}\CA_q)\le \bigl(n+\frac{q-1}{n}\bigr)\tr(WC)\] 
with equality if and only if $\CN$ acts trivially on $\IBr(b)$.
\end{Prop}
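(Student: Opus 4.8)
The plan is to first invoke \autoref{WPPW} and assume that $W$ is integral positive definite and commutes with $P_\gamma$ for every $\gamma\in\CN$, and then to argue by induction on $q$ as announced in the outline. One basic estimate will be used throughout: by \eqref{CPPC} each $P_\delta$ ($\delta\in\CN$) commutes with $C$, it also commutes with $W$ and is a permutation matrix, and $W,C$ are positive definite (the former by \autoref{lemsym}); hence \autoref{lemposperm} (with $A=W$, $B=C$, $P=P_\delta$) gives $\tr(WCP_\delta)\le\tr(WC)$, with equality precisely when $P_\delta=1_l$, so that
\[
\tr(WCP_\CN)=\sum_{\delta\in\CN}\tr(WCP_\delta)\le n\tr(WC),
\]
with equality if and only if $\CN$ acts trivially on $\IBr(b)$. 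The whole argument reduces the asserted bound to this one inequality.

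For the base case $q=p$ we have $i'=0$ for all $i$, and since $\CG$ is cyclic with $n_p=1$ the group $\CN=\langle h\rangle$ is cyclic of order $n$; thus \eqref{longeq} collapses to $A_i^\text{t}A_j=CP_\CN+C\sum_{\delta\in\CN,\ j\delta\equiv i\,(p)}P_\delta$. I would order the $p-1$ Fourier indices by listing the $m:=(p-1)/n$ cosets of $\CN$ in $(\ZZ/p\ZZ)^\times$ consecutively, each coset $c\CN$ in the order $c,ch,\dots,ch^{n-1}$, so that consecutive indices inside a coset differ by the factor $h$. Then take $\CW$ to be the matrix $W_m$ of \autoref{lemwn} with $m=p-1$ blocks, formed using the permutation matrix $P_{h^{-1}}$ (permissible, as $W$ commutes with $P_{h^{-1}}$); it is integral positive definite. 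Now compute $\tr(\CW\CA_p^\text{t}\CA_p)$ blockwise: the diagonal blocks $W$ meet $CP_\CN+C$ and give $(p-1)\bigl(\tr(WCP_\CN)+\tr(WC)\bigr)$; at each of the $m(n-1)=p-1-m$ steps interior to a coset the off-diagonal block $-\tfrac12P_{h^{\mp1}}W$ of $\CW$ meets $CP_\CN+CP_{h^{\pm1}}$, and the twist $h^{-1}$ is chosen exactly so that $\tr(P_{h^{-1}}WCP_h)=\tr(WC)$; at each of the $m-1$ steps between cosets the $P_{h^{\pm1}}$-term is absent. Using $P_\CN P_{h^{-1}}=P_\CN$, the off-diagonal total is $-(p-2)\tr(WCP_\CN)-(p-1-m)\tr(WC)$, and everything collapses to
\[
\tr(\CW\CA_p^\text{t}\CA_p)=\tr(WCP_\CN)+\tfrac{p-1}{n}\tr(WC)\ \le\ \Bigl(n+\tfrac{p-1}{n}\Bigr)\tr(WC),
\]
with equality exactly when $\CN$ acts trivially on $\IBr(b)$, by the first paragraph. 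This settles $q=p$.

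For the inductive step ($q\ge p^2$) I would use the discrete Fourier reorganisation underlying the construction of $\CA_q$ (equivalently, the $\ZZ[\zeta^p]$-module structure of $\ZZ[\zeta]$) to split $\{1,\dots,\phi(q)\}$ into the ``fresh'' indices $\{i:(q/p)\mid i\}$ and the rest. On the fresh indices one has $i'=0$, and \eqref{longeq} shows that the corresponding principal submatrix of $\CA_q^\text{t}\CA_q$ has the same shape as in the base case, now with $\CN$ replaced by its faithful image of order $n$ in $(\ZZ/p\ZZ)^\times$ (faithful because $n_p=1$); meanwhile the principal submatrix on the remaining indices is, after the reorganisation and up to the multiplicity coming from $\phi(q)=p\,\phi(q/p)$, an instance of the statement for $\QQ(\zeta^p)$. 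I would then apply the induction hypothesis to the latter block, the base-case construction to the fresh block (with the appropriate multiplicity blocks), and invoke the direct-sum/Kronecker closure built into \autoref{lemwn} to assemble an integral positive definite $\CW$ whose trace bound is the sum of the two pieces: $\bigl(n+\tfrac{q/p-1}{n}\bigr)\tr(WC)$ from the induction hypothesis and $\tfrac{\phi(q)}{n}\tr(WC)=\bigl(n+\tfrac{q-1}{n}\bigr)\tr(WC)-\bigl(n+\tfrac{q/p-1}{n}\bigr)\tr(WC)$ from the fresh part. Equality forces equality in each summand, hence $\CN$ trivial on $\IBr(b)$.

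The main obstacle is the bookkeeping in the inductive step: checking that the indicator functions of \eqref{longeq} for modulus $q$, once restricted to the non-fresh indices and passed through the Fourier reorganisation, reproduce verbatim the pattern for modulus $q/p$ (so that the induction hypothesis truly applies), and carrying this out while keeping precise track of the multiplicity factor $p$ and of how the twisting permutation $P_{h^{-1}}$ interacts with the splitting. By contrast, the base case is merely the telescoping computation above, once the coset-respecting ordering and the twist $h^{-1}$ are fixed.
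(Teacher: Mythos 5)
Your base case $q=p$ is correct and, in fact, gives a slightly different construction from the paper's. You use a single tridiagonal $W_{p-1}$ (in the sense of \autoref{lemwn}) with twist $P_{h^{-1}}$, ordered so that consecutive indices inside a $\CN$-coset differ by the generator $h$; the paper uses $U_{n'}\otimes W_n$ with $n'=(p-1)/n$ and $W_n$ built from $P_\rho$. Both are integral positive definite by \autoref{lemwn}, and both yield exactly $\tr(\CW\CA_p^\text{t}\CA_p)=\tr(WCP_\CN)+\tfrac{p-1}{n}\tr(WC)$; your telescoping bookkeeping checks out. So far so good.

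The inductive step, however, has a genuine gap. Your split into fresh indices $\{i:(q/p)\mid i\}$ and the rest does not produce the structure you claim. The fresh set has only $p-1$ elements, and (as you correctly observe) the corresponding diagonal block of $\CA_q^\text{t}\CA_q$ is identical to the $q=p$ Gram matrix; hence the base-case construction applied to it yields $\tr(WCP_\CN)+\tfrac{p-1}{n}\tr(WC)$, \emph{not} the $\tfrac{\phi(q)}{n}\tr(WC)$ you assign to the fresh part. Meanwhile the complementary block has $\phi(q)-(p-1)=p\,\phi(q/p)-(p-1)$ indices, which for $q>p^2$ is not an integer multiple of $\phi(q/p)$ (e.g.\ for $q=p^3$ the ratio is $(p^2-1)/p$), so it cannot be ``an instance of the statement for $\QQ(\zeta^p)$ up to a multiplicity''; the induction hypothesis simply does not apply to it. Thus the two contributions you add do not describe the two blocks you defined, and the totals do not reconcile. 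The paper instead splits along $I_p=\{i:p\mid i\}$ (size $\phi(q/p)$) and $I_{p'}=\{i:p\nmid i\}$ (size $\phi(q/p)(p-1)$). On $I_p$ one really does get $\Delta_p=\CA_{q/p}^\text{t}\CA_{q/p}$ verbatim, so induction applies; and the block $\Delta_{p'}$ is treated not by recursion but by a rank argument: \autoref{rank} gives $\operatorname{rk}\Delta_{p'}=l\phi(q/p)(p-1)/n$, one exhibits a full-rank subset $J\subseteq I_{p'}$ with $(A_i:i\in J)^\text{t}(A_i:i\in J)=1_{\phi(q/p)/n}\otimes(1+\delta_{ij})_{i,j=1}^{p-1}\otimes C$, diagonalizes $\Delta_{p'}$ via an integral $S$, and takes $\CW_{p'}=S(1_{\phi(q/p)/n}\otimes U_{p-1}\otimes W\oplus 1_s)S^\text{t}$ to obtain exactly $\tfrac{\phi(q)}{n}\tr(WC)$. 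This rank/diagonalization step is the essential ingredient your proposal is missing, and without it the non-recursive part cannot be controlled.
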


\begin{proof}
We argue by induction on $q$. If $q=1$, then $\CA_q=A_1=Q$, $n=1$ and the claim holds with $\CW=W$ (\autoref{WPPW}). 
The next case requires special treatment as well.

\textbf{Case~1:} $q=p$.\\
Then $i'=0$ for all $i$ and \eqref{longeq} simplifies to
\[
A_i^\text{t}A_j=C\sum_{\delta\in\CN}P_\delta\bigl([j\delta\equiv i]+[0\delta\equiv0]\bigr)=
\begin{cases}
CP_\CN&\text{if }i\not\equiv j\pmod{\mathcal{N}},\\
C(P_\CN+P_{j^{-1}i})&\text{if }i\equiv j\pmod{\mathcal{N}}.
\end{cases}
\]
After permuting the columns of $\CA_q$ if necessary, we obtain
\[\CA_q^\text{t}\CA_q=1^{\phi(q)\times\phi(q)}\otimes P_\CN C+1_{n'}\otimes(P_{\gamma^{-1}\delta}C)_{\gamma,\delta\in\mathcal{N}}\]
where $n':=(p-1)/n$. We fix a generator $\rho$ of $\CN$. Then we may write $(P_{\gamma^{-1}\delta}C)_{\gamma,\delta\in\mathcal{N}}=(P_\rho^{j-i}C)_{i,j=1}^n$. 

By \autoref{WPPW}, we may assume that $W$ is (integral) positive definite and commutes with $P_\rho$. 
Let $W_n$ as in \autoref{lemwn} where we use $P_\rho$ instead of $P$. A repeated application of that lemma shows that the matrix $\CW:=U_{n'}\otimes W_n$ is integral positive definite.
Moreover, since $P_\CN P_\rho=P_\CN=P_\CN P_\rho^\text{t}$, we have
\begin{align*}
\tr(\CW \CA_q^\text{t}\CA_q)&=\tr\bigl((U_{n'}\otimes W_n)(1^{\phi(q)\times\phi(q)}\otimes P_\CN C)\bigr)+\tr\bigl((U_{n'}\otimes W_n)(1_{n'}\otimes(P_\rho^{j-i}C))\bigr)\\
&=\tr\bigl((U_{n'}\otimes W_n)(1^{n'\times n'}\otimes 1^{n\times n}\otimes P_\CN C)\bigr)+\tr\bigl(U_{n'}\otimes W_n(P_\rho^{j-i}C)\bigr)\\
&=\tr\bigl(U_{n'}1^{n'\times n'}\bigr)\tr\bigl(W_n(1^{n\times n}\otimes P_\CN C)\bigr)+\tr(U_{n'})\tr\bigl(W_n(P_\rho^{j-i}C)\bigr)\\
&=\tr\bigl(W_n(1^{n\times n}\otimes P_\CN C)\bigr)+n'\tr\bigl(W_n(P_\rho^{j-i}C)\bigr)\\
&=\sum_{i=1}^n\tr(WCP_\CN )-\sum_{i=1}^{n-1}\tr(WCP_\CN P_\rho)+n'\Bigl(\sum_{i=1}^n\tr(WC)-\sum_{i=1}^{n-1}\tr(WCP_\rho P_\rho^\text{t})\Bigr)\\
&=\tr(WCP_\CN)+n'\tr(WC).
\end{align*}
Finally, \autoref{lemposperm} implies
\[\tr(WCP_\CN)=\sum_{\delta\in\CN}\tr(WCP_\delta)\le n\tr(WC)\]
with equality if and only if $\CN$ acts trivially on $\IBr(b)$. This completes the proof in the case $q=p$.

\textbf{Case~2:} $q>p$.\\
Let 
\begin{align*}
I_p&:=\{1\le i\le\phi(q):p\mid i\},&
I_{p'}&:=\{1\le i\le\phi(q):p\nmid i\}.
\end{align*}
Then $|I_p|=\phi(q)/p=\phi(q/p)$ and $|I_{p'}|=\phi(q)-\phi(q/p)=\phi(q/p)(p-1)$.
If $i\in I_p$ and $j\in I_{p'}$, then $j\delta-i\not\equiv 0\pmod{q/p}$ for every $\delta\in\CN$ and $A_i^\text{t}A_j=0$ by \eqref{longeq}. 
Hence, after relabeling the columns of $\CA_q$, we obtain
\[\CA_q^\text{t}\CA_q=\begin{pmatrix}
\Delta_p&0\\0&\Delta_{p'}
\end{pmatrix}\]
where $\Delta_p$ corresponds to the indices in $I_p$.
Since $n\mid p-1$, we may regard $\CN$ as a subgroup of $\Gal(\QQ(\zeta^p)|\QQ)$.
For $i\in I_p$ let $j=i/p$. Then $i'\equiv -i\pmod{q/p}$ implies $i'/p\equiv -j\pmod{q/p^2}$ and $0\le i'/p<q/p^2$. Hence, $j'=i'/p$ where the left hand side refers to $q/p$. It follows from \eqref{longeq} that
$\Delta_p=\CA_{q/p}^\text{t}\CA_{q/p}$. By induction on $q$ there exists an integral positive definite $\CW_p$ such that \[\tr(\CW_p\Delta_p)\le \Bigl(n+\frac{q/p-1}{n}\Bigr)\tr(WC)\] 
with equality if and only if $\CN$ acts trivially on $\IBr(b)$.

It remains to consider $\Delta_{p'}$. By \autoref{rank}, $\CA_{q/p}$ and $\Delta_p$ have rank $l\phi(q/p)/n$ and therefore $\Delta_{p'}$ has rank 
\[l(\phi(q)-\phi(q/p))/n=l\phi(q/p)(p-1)/n.\] 
We define a subset $J\subseteq I_{p'}$ such that $|J|=\phi(q/p)(p-1)/n$ and the matrix $(A_i:i\in J)$ has full rank. Let $R$ be a set of representatives for the orbits of $\{i\in I_{p'}:1\le i\le q/p\}$ under the multiplication action of $\CN$ modulo $q/p$. Note that every orbit has size $n$. For $r\in R$ let 
\[J_r:=\{r+jq/p:j=0,\ldots,p-2\}\subseteq I_{p'}\] 
and $J:=\bigcup_{r\in R}J_r$. Since $J_r\cap J_s=\varnothing$ for $r\ne s$, we have $|J|=\phi(q/p)(p-1)/n$. If $i\in J_r$ and $j\in J_s$ with $r\ne s$, then $j\delta\not\equiv i\pmod{q/p}$ for every $\delta\in\CN$. Consequently, $A_i^\text{t}A_j=0$. Now let $i,j\in J_r$. Then \eqref{longeq} implies
\[A_i^\text{t}A_j=C(1+\delta_{ij}).\]
After relabeling we obtain
\[(A_i:i\in J)^\text{t}(A_i:i\in J)=1_{\phi(q/p)/n}\otimes (1+\delta_{ij})_{i,j=1}^{p-1}\otimes C.\]
In particular, $(A_i:i\in J)$ has full rank. Since $\Delta_{p'}$ has the same rank, there exists an integral matrix 
$S\in\GL(l\phi(q/p)(p-1),\QQ)$ such that 
\[S^\text{t}\Delta_{p'}S=1_{\phi(q/p)/n}\otimes (1+\delta_{ij})\otimes C\oplus 0_s\]
where $s:=l\phi(q/p)(p-1)(n-1)/n$. 
Let 
\[\CW_{p'}:=S\bigl(1_{\phi(q/p)/n}\otimes U_{p-1}\otimes W\oplus 1_s\bigr)S^{\text{t}}.\]
Then $\CW_{p'}$ is integral positive definite by \autoref{lemwn}.
Moreover, 
\begin{align*}
\tr(\CW_{p'}\Delta_{p'})&=\tr\bigl((1_{\phi(q/p)/n}\otimes U_{p-1}\otimes W)(1_{\phi(q/p)/n}\otimes (1+\delta_{ij})\otimes C)\bigr)+\tr(1_s0_s)\\
&=\frac{\phi(q/p)}{n}\tr\bigl(U_{p-1}(1+\delta_{ij})\bigr)\tr(WC)=\frac{\phi(q/p)p}{n}\tr(WC)=\frac{\phi(q)}{n}\tr(WC).
\end{align*}
Finally, we set $\CW:=\CW_p\oplus\CW_{p'}$. Then $\CW$ is integral positive definite and
\begin{align*}
\tr(\CW\CA_q^\text{t}\CA_q)&=\tr(\CW_p\Delta_p)+\tr(\CW_{p'}\Delta_{p'})\le \Bigl(n+\frac{q/p-1}{n}\Bigr)\tr(WC)+\frac{\phi(q)}{n}\tr(WC)\\
&=\Bigl(n+\frac{q-1}{n}\Bigr)\tr(WC)
\end{align*}
with equality if and only if $\CN$ acts trivially on $\IBr(b)$. 
\end{proof}

To complete the proof of Theorem~B it remains to show the following.

\begin{Prop}
Theorem~B holds in the case $p>2$ and $n_p>1$.
\end{Prop}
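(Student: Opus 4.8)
The plan is to mimic the inductive structure of \autoref{prop} but with an additional reduction that peels off the $p$-part of $\CN$. Write $n=n_pn_{p'}$ with $n_p>1$, so $p\mid n$. Since $\CG$ is cyclic, $\CN$ has a unique subgroup $\CN_0$ of index $p$; note $\CN_0$ still contains a Sylow $p$-subgroup iff $n_p>p$, but in any case $\CN_0$ corresponds to a subsection-type situation ``one level up''. The idea is to relate the matrix $\CA_q^\text{t}\CA_q$ for $\CN$ with the analogous matrix for a smaller $q$ (namely $q/p$) and the group $\CN_0$ acting on it, exactly as in Case~2 of \autoref{prop}: split $\{1,\dots,\phi(q)\}$ into $I_p$ and $I_{p'}$ using \eqref{longeq}, observe $A_i^\text{t}A_j=0$ across the two blocks, so that $\CA_q^\text{t}\CA_q=\Delta_p\oplus\Delta_{p'}$. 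The block $\Delta_p$ is again $\CA_{q/p}^\text{t}\CA_{q/p}$, but now with respect to the group $\CN$ viewed inside $\Gal(\QQ(\zeta^p)\mid\QQ)$, whose order relative to the smaller cyclotomic field has $p$-part $n_p/p$ (or possibly the same $n_p$ if $n_p\mid q/p^2$ still — one has to track this carefully). Apply the induction hypothesis: either \autoref{prop} if the new $p$-part has dropped to $1$, or this very proposition with a strictly smaller $q$.

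The genuinely new work is the $\Delta_{p'}$ block, where the action of the Sylow $p$-subgroup $\CN_p$ of $\CN$ is visible and one cannot simply diagonalize to copies of $C$. Here I expect to use \eqref{longeq} to compute $A_i^\text{t}A_j$ on $I_{p'}$ explicitly: for $i,j$ in the same $\CN$-orbit modulo $q/p$ one gets $C$ times a sum of permutation matrices $P_\delta$ over a coset-type set, and across orbits one gets $0$. After relabeling, $\Delta_{p'}$ becomes a Kronecker-type block with factors that are (after \autoref{rank} pins down the rank) congruent to $U_{p-1}\otimes(\text{block built from }P_\CN,P_{\CN_0},\dots)\otimes C$. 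The weight matrix $\CW_{p'}$ must be built from $U$-type matrices tensored with a $W_m$-matrix from \autoref{lemwn} (using the permutation matrices $P_\rho$ for a generator $\rho$ of $\CN$, which commute with $W$ by \autoref{WPPW} and with $C$ by \eqref{CPPC}), guaranteeing integral positive definiteness. The trace computation then collapses via $\tr(U_m)=1$, $\tr(U_m 1^{m\times m})=1$, and $P_\CN P_\rho=P_\CN$, yielding a contribution of the form $\frac{\phi(q)-\text{something}}{n_{p'}}\tr(WC)$ plus, when $\CN_p$ acts with fixed points, a $\tr(WCP_{\CN_p})\le n_p\tr(WC)$ term from \autoref{lemposperm}.

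Assembling the bound, one wants $\tr(\CW_p\Delta_p)+\tr(\CW_{p'}\Delta_{p'})\le\bigl(n+\frac{q-n_p}{n_{p'}}\bigr)\tr(WC)$, matching $k_0(\langle u\rangle\rtimes\CN)$ by \autoref{k0}. The induction on $\Delta_p$ should deliver the ``$n+\frac{q/p-n_p/?}{n_{p'}}$'' part and the $\Delta_{p'}$ computation the remaining ``$\frac{\phi(q)}{n_{p'}}$''-type term, so that the arithmetic $n+\frac{q/p-n_p/p}{n_{p'}}+\frac{\phi(q)}{n_{p'}}=n+\frac{q-n_p}{n_{p'}}$ checks out (using $\phi(q)=q-q/p$ and $n_p\mid q/p$). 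For the height-$0$ non-vanishing one still invokes \autoref{padic}: the relevant rows survive in $\CA_q$, and one must locate them in the right sub-block — most likely, as in Case~2 of \autoref{p2}, in the ``top'' index $i$ where the $\CN_p$-action is trivial on that column, using the $p$-adic valuation of $d_\chi\widetilde{C}\overline{d_\chi}^\text{t}$ together with \eqref{CPPC} to show the contributions from non-fixed columns are even-valued and cancel.

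The main obstacle I anticipate is bookkeeping the $p$-part of $\CN$ correctly across the reduction $q\rightsquigarrow q/p$: unlike in \autoref{prop}, descending one cyclotomic level may or may not shrink $n_p$, and the structure of $\Delta_{p'}$ (in particular which permutation matrices $P_\delta$ appear and with what multiplicities) depends delicately on how $\CN_p$ sits relative to $q/p$ versus $q/p^2$. Getting the Kronecker decomposition of $\Delta_{p'}$ exactly right — so that it is manifestly congruent over $\ZZ$ to a direct sum of $U_{p-1}\otimes W_m\otimes C$ type blocks (plus a zero block of the rank deficiency from \autoref{rank}) — and then verifying that the matched weight matrix is integral positive definite via \autoref{lemwn} and \autoref{lemsym}, is where the ``sophisticated matrix analysis'' promised in the proof outline will be needed. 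The strictness claim, that the first inequality is strict exactly when $\CN$ acts non-trivially on $\IBr(b)$, follows by tracing the single application of \autoref{lemposperm} that is not an equality, exactly as in the previous two propositions.
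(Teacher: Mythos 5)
Your plan misses the paper's key structural idea, and the arithmetic you rely on does not actually close. The paper does \emph{not} split the index set by divisibility by $p$ and does \emph{not} add the trace contributions of the two resulting blocks. Instead it partitions $\{1,\ldots,\phi(q)\}$ into $I_1=\{i:n_p\mid i\}$ and $I_2=\{i:n_p\nmid i\}$, yielding $\CA_q^\text{t}\CA_q=\Delta_1\oplus\Delta_2$, and then shows via a $p$-adic valuation argument (using \autoref{padic} and \eqref{CPPC}, and the explicit action of a suitable $\delta\in\CN_p$ on the $I_2$-indices) that \emph{every} height-$0$ row has a non-zero projection into the $I_1$-columns. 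Consequently $k_0(B)\le\tr(\CW_1\Delta_1)$ and $\Delta_2$ never enters the count at all. Then $\Delta_1=(1_{\phi(q/n_p)}\otimes P_{\CN_p})\CA_{q/n_p}^\text{t}\CA_{q/n_p}$, with $\CA_{q/n_p}$ taken relative to the $p'$-group $\CN_{p'}$; applying \autoref{prop} to $\CA_{q/n_p}^\text{t}\CA_{q/n_p}$ (this is the already-settled $n_p=1$ case, not a recursion on $q$) and \autoref{lemposperm} to the $P_{\CN_p}$-factor yields $n_p\bigl(n_{p'}+\frac{q/n_p-1}{n_{p'}}\bigr)\tr(WC)=\bigl(n+\frac{q-n_p}{n_{p'}}\bigr)\tr(WC)$ in one step.

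Your proposal instead splits by $p$, descends $q\rightsquigarrow q/p$, and tries to prove $\tr(\CW_p\Delta_p)+\tr(\CW_{p'}\Delta_{p'})\le\bigl(n+\frac{q-n_p}{n_{p'}}\bigr)\tr(WC)$. This cannot work as stated. When $n_p>1$, the kernel of $\CG\to\Gal(\QQ(\zeta^p)|\QQ)$ has order $p$ and is contained in $\CN_p$, so the image of $\CN$ at level $q/p$ has order $n/p$ with $p$-part $n_p/p$. The $\Delta_p$-contribution an inductive argument can give is therefore $\bigl(\frac{n}{p}+\frac{q/p-n_p/p}{n_{p'}}\bigr)\tr(WC)$, whose constant term is $n/p$, not $n$; your claimed identity $n+\frac{q/p-n_p/?}{n_{p'}}+\frac{\phi(q)}{n_{p'}}=n+\frac{q-n_p}{n_{p'}}$ forces $?=1$, which is exactly the case $n_p=1$ you have excluded. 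Moreover, the $\Delta_{p'}$-block for $n_p>1$ is no longer congruent to copies of $(1+\delta_{ij})\otimes C$ as in Case~2 of \autoref{prop}, since $\CN_p$ now permutes the $I_{p'}$-indices non-trivially; you flag this difficulty but do not resolve it, and in fact it is irresolvable along this route because the sharp bound $k_0(\langle u\rangle\rtimes\CN)\tr(WC)$ is already attained by $\tr(\CW_1\Delta_1)$ alone, leaving no room for a positive $\tr(\CW_{p'}\Delta_{p'})$ term. You do gesture, in your final paragraph, at locating the height-$0$ rows in a sub-block via \autoref{padic} and \eqref{CPPC} — this is the right instinct and is exactly what the paper does — but it is in tension with the rest of your plan: if the rows are located in one block, you should drop the other block entirely rather than add its trace. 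Making that decision, and making it with the split by $n_p$ rather than by $p$, is the missing step.
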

\begin{proof}
Let 
\begin{align*}
I_1:=\{1\le i\le\phi(q):n_p\mid i\},&&I_2:=\{1\le i\le\phi(q):n_p\nmid i\}.
\end{align*}
As in the proof of \autoref{prop} we have
\[\CA_q^\text{t}\CA_q=\begin{pmatrix}
\Delta_1&0\\0&\Delta_2
\end{pmatrix}\]
where $\Delta_1$ corresponds to the indices in $I_1$. Let $\CN=\CN_p\times\CN_{p'}$ where $\CN_p:=\langle 1+q/n_p+q\ZZ\rangle$ is the unique Sylow $p$-subgroup of $\CN$. Then $\delta i\equiv i\pmod{q}$ for $\delta\in\CN_p$ and $i\in I_1$. Hence, for $i,j\in I_1$ we have
\begin{align*}
A_i^\text{t}A_j&=C\sum_{\delta\in\CN}P_\delta\bigl([j\delta\equiv i]-[j\delta\equiv -i']+[j'\delta\equiv i']-[j'\delta\equiv-i]\bigr)\\
&=CP_{\CN_p}\sum_{\delta\in\CN_{p'}}P_\delta\bigl([j\delta\equiv i]-[j\delta\equiv -i']+[j'\delta\equiv i']-[j'\delta\equiv-i]\bigr).
\end{align*}
For $i\in I_1$ it is easy to see that $i'/n_p=(i/n_p)'$ when the right hand side is considered with respect to $q/n_p$ (see proof of \autoref{prop}).
It follows that
\[\Delta_1=(1_{\phi(q/n_p)}\otimes P_{\CN_p})\CA_{q/n_p}^\text{t}\CA_{q/n_p}\]
where we consider $\CA_{q/n_p}$ with respect to the $p'$-group $\CN_{p'}$.
By \autoref{prop}, there exists an integral positive definite $\CW_1$ such that
\begin{equation}\label{delta1}
\tr(\CW_1\CA_{q/n_p}^\text{t}\CA_{q/n_p})\le \Bigl(n_{p'}+\frac{q/n_p-1}{n_{p'}}\Bigr)\tr(WC).
\end{equation} 
Moreover, equality holds if and only if $\CN_{p'}$ acts trivially on $\IBr(b)$.
By construction, $\CA_{q/n_p}^\text{t}\CA_{q/n_p}$ is positive semidefinite. 
By \eqref{CPPC} and \eqref{longeq}, $\CA_{q/n_p}^\text{t}\CA_{q/n_p}$ commutes with $1_{\phi(q/n_p)}\otimes P_{\delta}$ for $\delta\in\CN_p$. Hence, \autoref{lemposperm} implies
\begin{equation}\label{pmidn}
\begin{split}
\tr(\CW_1\Delta_1)&=\tr\bigl(\CW_1(1_{\phi(q/n_p)}\otimes P_{\CN_p})\CA_{q/n_p}^\text{t}\CA_{q/n_p}\bigr)\\
&\le n_p\tr(\CW_1\CA_{q/n_p}^\text{t}\CA_{q/n_p})\le\Bigl(n+\frac{q-n_p}{n_{p'}}\Bigr)\tr(WC).
\end{split}
\end{equation}
Suppose that $\tr(\CW_1\Delta_1)=\bigl(n+\frac{q-n_p}{n_{p'}}\bigr)\tr(WC)$. Then, by \eqref{delta1}, $\CN_{p'}$ acts trivially on $\IBr(b)$ and the matrices $A_i^\text{t}A_j$ with $i,j\in I_1$ are scalar multiples of $CP_{\CN_p}$. 
We write $\CA_{q/n_p}^\text{t}\CA_{q/n_p}=(A_{ij})$ such that $A_{in_p}^\text{t}A_{jn_p}=P_{\CN_p}A_{ij}$.
Note that $A_{11}=2C$ is positive definite. 
As in the proof of \autoref{lemposperm}, we construct a positive semidefinite matrix $M=(m_{ij}):=A^{1/2}\CW_1A^{1/2}$ where $A^{1/2}A^{1/2}=(A_{ij})_{i,j}$. By way of contradiction, suppose that $P_\delta\ne 1_l$ for some $\delta\in\CN_p$. Let $1\le i\le l$ such that $\delta(i)\ne i$, and let $x=(x_j)\in\ZZ^{\phi(q/n_p)l}$ with $x_i=-x_{\delta(i)}=1$ and zero elsewhere.
Then $x(A_{ij})x^\text{t}>0$ since $A_{11}$ is positive definite. Thus, $A^{1/2}x^\text{t}\ne 0$. Since $\CW_1$ is positive definite (\autoref{lemsym}), it follows that $xMx^\text{t}>0$ and $m_{i\delta(i)}<(m_{ii}+m_{\delta(i)\delta(i)})/2$. Hence, the proof of \autoref{lemposperm} leads to 
\begin{align*}
\tr\bigl(\CW_1(1_{\phi(q/n_p)}\otimes P_\delta)\CA_{q/n_p}^\text{t}\CA_{q/n_p}\bigr)&=\tr(A^{1/2}\CW_1(1_{\phi(q/n_p)}\otimes P_\delta)A^{1/2})\\
&=\tr(M(1_{\phi(q/n_p)}\otimes P_\delta))<\tr(M)=\tr(\CW_1\CA_{q/n_p}^\text{t}\CA_{q/n_p})
\end{align*}
and we derive the contradiction $\tr(\CW_1\Delta_1)<n_p\tr(\CW_1\CA_{q/n_p}^\text{t}\CA_{q/n_p})$. Thus, we have shown that equality in \eqref{pmidn} can only hold if $\CN$ acts trivially on $\IBr(b)$.

Now we use the argument from \autoref{p2} to deal with $\Delta_2$. Let $\chi\in\Irr(B)$ of height $0$, and let $d_\chi=\sum_{i=1}^{\phi(q)}a_i\zeta^i$ be the corresponding row of $Q$. By \autoref{padic}, we have
\[0=\nu(d_\chi\widetilde{C}\overline{d_\chi}^\text{t})=\nu\Bigl(\sum_{i,j=1}^{\phi(q)}a_i\widetilde{C}a_j^\text{t}\Bigr)\]
where $\nu$ is the $p$-adic valuation. 
In order to show that $a_i\ne 0$ for some $i\in I_1$, it suffices to show that
\begin{equation}\label{widec}
\sum_{i,j\in I_2}a_i\widetilde{C}a_j^\text{t}\equiv 0\pmod{p}.
\end{equation}
For any $\delta\in\CN_p$ we have
\[\sum_{i=1}^{\phi(q)}A_iP_\delta\zeta^i=QP_\delta=\delta(Q)=\sum_{i=1}^{\phi(q)}A_i\zeta^{i\delta}.\]
Restricting to the indices $i\in I_2$ and taking the valuation yields
\[\sum_{i\in I_2}A_iP_\delta\equiv\sum_{i\in I_2}A_i\pmod{p}.\]
Let $i\in I_2$ be arbitrary and choose $\delta\in\CN_p$ such that $\gcd(q,i)p=|\langle\delta\rangle|$. Let
\[\{i_1,\ldots,i_{p-1}\}=\bigl\{j\in I_2:j\equiv i\pmod{q/p}\bigr\}.\] 
We may assume that $i_1\delta\equiv-i'\pmod{q}$ and $i_j\delta\equiv i_{j-1}\pmod{q}$ for $j=2,\ldots,p-1$.
Since $\zeta^{-i'}=-\zeta^{i_1}-\ldots-\zeta^{i_{p-1}}$, we obtain $A_{i_{p-1}}P_\delta=-A_{i_1}$ and $A_{i_j}P_\delta=A_{i_{j+1}}-A_{i_1}$ for $j=1,\ldots,p-2$. 
Hence, 
\begin{align*}
\Bigl(\sum_{j\in I_2}a_j\Bigr)\widetilde{C}a_{i_1}^\text{t}&=\Bigl(\sum_{j\in I_2}a_j\Bigr)P_\delta\widetilde{C}P_\delta^\text{t}a_{i_1}^\text{t}\equiv \Bigl(\sum_{j\in I_2}a_j\Bigr)\widetilde{C}(a_{i_2}-a_{i_1})^\text{t}\equiv \Bigl(\sum_{j\in I_2}a_j\Bigr)\widetilde{C}(a_{i_3}-a_{i_2})^\text{t}\\
&\equiv\ldots\equiv \Bigl(\sum_{j\in I_2}a_j\Bigr)\widetilde{C}(a_{i_{p-1}}-a_{i_{p-2}})^\text{t}\equiv -\Bigl(\sum_{j\in I_2}a_j\Bigr)\widetilde{C}a_{i_{p-1}}^\text{t}\pmod{p}.
\end{align*}
Now it is easy to see that
\[\Bigl(\sum_{j\in I_2}a_j\Bigr)\widetilde{C}(a_{i_1}+\ldots+a_{i_{p-1}})^\text{t}\equiv \frac{p(p-1)}{2}\Bigl(\sum_{j\in I_2}a_j\Bigr)\widetilde{C}a_{i_1}^\text{t}\equiv 0\pmod{p}\]
and \eqref{widec} follows. Thus, we have shown that every height $0$ character has a non-vanishing part in $A_i$ for some $i\in I_1$. 
Hence by \eqref{pmidn}, 
\[k_0(B)\le\tr(\CW_1\Delta_1)\le \Bigl(n+\frac{q-n_p}{n_{p'}}\Bigr)\tr(WC)\]
with strict inequality if $\CN$ acts non-trivially on $\IBr(b)$. By \autoref{k0}, the proof is complete.
\end{proof}

Now it is time to derive Theorem~A from Theorem~B. For the convenience of the reader we restate it as follows.

\begin{Prop}
If $u\in\Z(D)$ in the situation above, then
\[k(B)\le \Bigl(n+\frac{q-1}{n}\Bigr)\tr(WC)\le q\tr(WC).\]
The first inequality is strict if $\mathcal{N}$ acts non-trivially on $\IBr(b)$ and the second inequality is strict if and only if $1<n<q-1$.
\end{Prop}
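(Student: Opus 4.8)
The plan is to extract Theorem~A from Theorem~B; the only new ingredient is that for $u\in\Z(D)$ \emph{every} row of the generalized decomposition matrix $Q$ is non-zero, so that the argument bounding $k_0(B)$ in fact bounds $k(B)$. First I would record that, since $u\in\Z(D)$, the Burnside fusion argument recalled after Theorem~B shows that $\CN$ is a $p'$-group; hence $n_p=1$, $n_{p'}=n$, and $n=1$ when $p=2$. For $p>2$ this is precisely the hypothesis of \autoref{prop}, which therefore produces an integral positive definite matrix $\CW\in\RR^{\phi(q)l\times\phi(q)l}$ with
\[\tr(\CW\CA_q^\text{t}\CA_q)\le\Bigl(n+\frac{q-1}{n}\Bigr)\tr(WC),\]
strictly if $\CN$ acts non-trivially on $\IBr(b)$. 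For $p=2$ we have $n=1$, and the proof of \autoref{p2} (the case $q\le2$, or Case~1 with $\CN$ trivial) supplies $\CW$ with $\tr(\CW\CA_q^\text{t}\CA_q)=q\tr(WC)=\bigl(n+\frac{q-1}{n}\bigr)\tr(WC)$, the strictness clause then being vacuous since $\CN=1$.

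Next I would transfer non-vanishing from $Q$ to $\CA_q$. Because $\zeta$ is a unit of $\ZZ[\zeta]$, the set $\{\zeta^i:1\le i\le\phi(q)\}$ is the image of the power basis $\{\zeta^i:0\le i\le\phi(q)-1\}$ under multiplication by $\zeta$, hence again a $\ZZ$-basis of $\ZZ[\zeta]$. By \autoref{padic} (applicable since $u\in\Z(D)$), every row $d_\chi=\sum_{i=1}^{\phi(q)}a_i\zeta^i$ of $Q$ with $\chi\in\Irr(B)$ is non-zero, so the corresponding row of $\CA_q$ is non-zero. As $\CW$ is integral positive definite, each such row $a$ satisfies $a\CW a^\text{t}\ge1$, and summing over $\Irr(B)$ gives
\[k(B)\le\sum_{\chi\in\Irr(B)}a_\chi\CW a_\chi^\text{t}=\tr(\CA_q\CW\CA_q^\text{t})=\tr(\CW\CA_q^\text{t}\CA_q)\le\Bigl(n+\frac{q-1}{n}\Bigr)\tr(WC),\]
with strict inequality if $\CN$ acts non-trivially on $\IBr(b)$. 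This yields the first inequality together with its strictness.

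For the second inequality the computation is elementary: $q-\bigl(n+\frac{q-1}{n}\bigr)=\frac{(n-1)(q-1-n)}{n}$. Since $n=|\CN|$ divides $\phi(q)=q-q/p$, we have $n\le q-1$ when $q>1$, while $n=1$ when $q=1$; in either case $(n-1)(q-1-n)\ge0$. As $\tr(WC)>0$ (both $W$ and $C$ being positive definite, $W$ by \autoref{lemsym}), it follows that $\bigl(n+\frac{q-1}{n}\bigr)\tr(WC)\le q\tr(WC)$, with equality exactly when $(n-1)(q-1-n)=0$, i.e. when $n\in\{1,q-1\}$; hence this inequality is strict if and only if $1<n<q-1$. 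The substantial work has already been absorbed by \autoref{prop} and \autoref{p2}, so I do not anticipate a genuine obstacle; the one point that calls for a moment's care is the passage from the non-vanishing of $d_\chi$ to that of the corresponding row of $\CA_q$, which is exactly where the remark that $\{\zeta^i:1\le i\le\phi(q)\}$ is an integral basis is used.
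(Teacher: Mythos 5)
Your proposal is correct and follows essentially the same route as the paper: invoke the Burnside fusion argument to see $\CN$ is a $p'$-group, then rerun the proofs of Propositions~\ref{p2} and~\ref{prop} using the fact (from \autoref{padic}) that for $u\in\Z(D)$ \emph{all} rows of $Q$, not merely those belonging to height-$0$ characters, are non-zero. The two small points where you elaborate beyond the paper — the explicit remark that $\{\zeta^i:1\le i\le\phi(q)\}$ is still an integral basis of $\ZZ[\zeta]$ so that non-vanishing of $d_\chi$ passes to the corresponding row of $\CA_q$, and the direct factorization $q-\bigl(n+\frac{q-1}{n}\bigr)=\frac{(n-1)(q-1-n)}{n}$ in place of the paper's convexity argument — are harmless, correct simplifications of steps the paper leaves implicit.
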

\begin{proof}
As mentioned in the introduction, $\CN$ is induced from the inertial quotient $\N_G(D,b_D)/D\C_G(D)$ and therefore $\CN$ is a $p'$-group. As a subgroup of $\Aut(\langle u\rangle)$, its order $n$ must divide $p-1$. 
For $p=2$ we obtain $n=1$ and $k_0(\langle u\rangle\rtimes\CN)=q$. For $p>2$, \autoref{k0} gives $k_0(\langle u\rangle\rtimes\CN)=n+\frac{q-1}{n}$. 
By \autoref{padic}, all rows of $Q$ are non-zero. Hence, the proofs of Propositions \ref{p2} and \ref{prop} actually show that $k(B)\le k_0(\langle u\rangle\rtimes\CN)\tr(WC)$ with strict inequality if $\CN$ acts non-trivially on $\IBr(b)$ (note that only Case~1 in the proof of \autoref{p2} is relevant). 
This implies the first two claims. The last claim follows, since $n+\frac{q-1}{n}$ is a convex function in $n$ and $1\le n\le q-1$.
\end{proof}

If the action of $\CN$ on $\IBr(b)$ is known, a careful analysis of the proofs above leads to even stronger estimates. For instance, in \autoref{prop} we have actually shown that
\[k_0(B)\le\tr(WCP_\CN)+\frac{q-1}{n}\tr(WC)\]
for $p>2$ and $n_p=1$. If $\overline{b}$ has cyclic defect groups, then $P_\CN$ is a direct sum of equal blocks of the form $d^{n/d\times n/d}$ (see \cite[Proposition~3.2]{Sambalerefine}). This can be used to give a simpler proof of \cite[Theorem~3.1]{Sambalerefine}.

\section{Consequences}

In this section we deduce some of the results stated in the introduction. 

\begin{Cor}[{Sambale~\cite[Lemma~1]{SambalekB}}]\label{sam}
Let $C=(c_{ij})_{i,j=1}^l$ be the Cartan matrix of a Brauer correspondent of $B$ in $\C_G(u)$ where $u\in\Z(D)$.
Then for every positive definite, integral quadratic form $q(x_1,\ldots,x_l)=\sum_{1\le i\le j\le l}{q_{ij}x_ix_j}$ we have 
\[k(B)\le\sum_{1\le i\le j\le l}{q_{ij}c_{ij}}.\]
\end{Cor}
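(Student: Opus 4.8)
The plan is to derive Corollary~\ref{sam} directly from Theorem~A by translating the language of quadratic forms into the language of weight matrices. Given a positive definite integral quadratic form $q(x_1,\dots,x_l)=\sum_{1\le i\le j\le l}q_{ij}x_ix_j$, I would first associate to it the symmetric matrix $W=(w_{ij})\in\RR^{l\times l}$ defined by $w_{ii}=q_{ii}$ and $w_{ij}=w_{ji}=q_{ij}/2$ for $i<j$, so that $xWx^\textnormal{t}=q(x)$ for every $x\in\RR^l$. Since $q$ is integral and positive definite in the usual sense, $q(x)$ is a positive integer for every $x\in\ZZ^l\setminus\{0\}$, hence $xWx^\textnormal{t}\ge 1$ for all such $x$; that is, $W$ is an admissible weight matrix for Theorem~A (in the terminology of the excerpt, $W$ is integral positive definite).

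Next I would take $u\in\Z(D)$ as in the hypothesis and let $b$ be the given Brauer correspondent in $\C_G(u)$, with $qC_u$ the Cartan matrix of $b$. The subtlety here is that Theorem~A is phrased in terms of the Cartan matrix $C$ of the block $\overline{b}$ of $\C_G(u)/\langle u\rangle$ dominated by $b$, whereas Corollary~\ref{sam} refers to the Cartan matrix $C_u=(c_{ij})$ of $b$ itself. However, for $u\in\Z(D)$ the defect group of $\overline{b}$ is $D/\langle u\rangle$ and one has the well-known relation that the Cartan matrix of $b$ equals $|\langle u\rangle|$ times the Cartan matrix of $\overline{b}$ (dominated blocks scale the Cartan matrix by the order of the central subgroup); equivalently, with the normalization in the excerpt, $qC=$ Cartan matrix of $b$, i.e. $C_u=qC$. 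Actually the cleanest route is to note that the statement of Corollary~\ref{sam} is exactly the special case of the older bound \cite[Lemma~1]{SambalekB2} recalled in the introduction, so it suffices to run Theorem~A with the weight matrix $W$ constructed above and then bound $\tr(WC)$ in terms of the $c_{ij}$.

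The computation then goes as follows. By Theorem~A (or rather the final Proposition of Section~2, which restates Theorem~A for $u\in\Z(D)$),
\[k(B)\le\Bigl(n+\frac{q-1}{n}\Bigr)\tr(WC)\le q\tr(WC)=\tr(W\cdot qC)=\tr(WC_u).\]
Writing this trace out entrywise, $\tr(WC_u)=\sum_{i,j}w_{ij}c_{ji}=\sum_i w_{ii}c_{ii}+\sum_{i<j}2w_{ij}c_{ij}=\sum_i q_{ii}c_{ii}+\sum_{i<j}q_{ij}c_{ij}=\sum_{1\le i\le j\le l}q_{ij}c_{ij}$, using the symmetry $c_{ij}=c_{ji}$ of the Cartan matrix. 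This gives the claimed inequality.

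The main obstacle, and the only point that requires care, is the bookkeeping around which Cartan matrix is being used: one must be sure that the factor $q=|\langle u\rangle|$ appearing in Theorem~A's bound $q\tr(WC)$ is precisely absorbed by the passage from the Cartan matrix $C$ of the dominated block $\overline{b}$ to the Cartan matrix $C_u=qC$ of $b$. Once that identification is made, the corollary is a one-line entrywise expansion of a trace, and no further analytic input beyond Theorem~A and \autoref{lemsym} is needed.
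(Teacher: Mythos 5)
Your argument is correct and follows exactly the route the paper takes: build the symmetric matrix $W$ from the coefficients of $q$, observe it is integral positive definite, apply the second inequality of Theorem~A, and cancel the factor $|\langle u\rangle|$ against the passage from the Cartan matrix of $\overline{b}$ to that of $b$. The only blemish is notational: you overload the symbol $q$ for both the quadratic form and $|\langle u\rangle|$, and the phrase ``with $qC_u$ the Cartan matrix of $b$'' contradicts your later (correct) use of $C_u$ as the Cartan matrix of $b$; cleaning that up gives precisely the paper's proof.
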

\begin{proof}
Let $t:=|\langle u\rangle|$. Then $t^{-1}C$ is the Cartan matrix of the block $\overline{b}$ in Theorem~A (see \cite[Theorem~1.22]{habil}).
Taking $W:=\frac{1}{2}(q_{ij}(1+\delta_{ij}))$ with $q_{ij}=q_{ji}$ we obtain 
\[xWx^\text{t}=\frac{1}{2}\sum_{1\le i,j\le l}{q_{ij}(1+\delta_{ij})x_ix_j}=\sum_{1\le i\le j\le l}{q_{ij}x_ix_j}=q(x)\ge 1\]
for every $x=(x_1,\ldots,x_l)\in\ZZ^l\setminus\{0\}$ and 
\[k(B)\le t\tr(Wt^{-1}C)=\tr(WC)=\sum_{1\le i\le j\le l}{q_{ij}c_{ij}}.\qedhere\]
\end{proof}

Wada's inequality~\eqref{W} follows from \autoref{sam} with $q(x)=\sum_{i=1}^lx_i^2-\sum_{i=1}^{l-1}x_ix_{i+1}$ (or $W=U_l$ in Theorem~A). 

\begin{Cor}[{Héthelyi--Külshammer--Sambale~\cite[Theorem~4.10]{HKS}}]
Suppose $p>2$.
Let $b$ be a Brauer correspondent of $B$ in $\C_G(u)$ where $u\in D$ and $l(b)=1$. Let $\lvert\N_G(\langle u\rangle,b):\C_G(u)\rvert=p^sr$ with $s\ge 0$ and $p\nmid r$. Then
\[k_0(B)\le\frac{|\langle u\rangle|+p^s(r^2-1)}{|\langle u\rangle|r}p^d\]
where $d$ is the defect of $b$.
\end{Cor}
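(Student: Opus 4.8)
The plan is to derive this as a direct specialisation of Theorem~B to the case $l(b)=1$. First I would record the standing hypotheses in the notation of Theorem~B: set $q:=|\langle u\rangle|$ and $\CN:=\N_G(\langle u\rangle,b)/\C_G(u)$, so that $|\CN|=p^sr$ with $n_p=p^s$ and $n_{p'}=r$ (since $\CN$ embeds in the cyclic group $\Aut(\langle u\rangle)$ and $p>2$, the decomposition $\CN=\CN_p\times\CN_{p'}$ is available). Because $l(b)=1$, the Cartan matrix of $b$ is the $1\times 1$ matrix $(q p^d)$, where $d$ is the defect of $\overline b$ (equivalently $qp^d$ is the order of a defect group of $b$); hence the Cartan matrix of the dominated block $\overline b$ of $\C_G(u)/\langle u\rangle$ is $C=(p^d)$. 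The only sensible choice of weight matrix is $W=(1)$, which trivially satisfies $xWx^{\mathrm t}\ge 1$ for all $x\in\ZZ\setminus\{0\}$, and then $\tr(WC)=p^d$.

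Next I would simply quote Theorem~B: it yields
\[
k_0(B)\le k_0\bigl(\langle u\rangle\rtimes\CN\bigr)\,\tr(WC)=k_0\bigl(\langle u\rangle\rtimes\CN\bigr)\,p^d.
\]
Since $p>2$ and $\CN$ is cyclic, \autoref{k0} computes $k_0(\langle u\rangle\rtimes\CN)=n+\frac{q-n_p}{n_{p'}}$ with $n=p^sr$, $n_p=p^s$, $n_{p'}=r$. Substituting gives
\[
k_0\bigl(\langle u\rangle\rtimes\CN\bigr)=p^sr+\frac{q-p^s}{r}=\frac{p^sr^2+q-p^s}{r}=\frac{q+p^s(r^2-1)}{r}.
\]
Multiplying by $p^d$ and dividing numerator and denominator by $q=|\langle u\rangle|$ yields exactly
\[
k_0(B)\le\frac{q+p^s(r^2-1)}{r}\,p^d=\frac{|\langle u\rangle|+p^s(r^2-1)}{|\langle u\rangle|r}\,p^d,
\]
which is the claimed bound.

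The only points needing a word of care are bookkeeping rather than genuine obstacles. One must be sure that the $C$ appearing in the Corollary's statement—the Cartan matrix of a Brauer correspondent of $B$ \emph{in $\C_G(u)$}—is $(qp^d)$ while the $C$ in Theorem~B is the Cartan matrix of the \emph{dominated} block in $\C_G(u)/\langle u\rangle$, namely $(p^d)$; the relation between the two is exactly the factor $q$ recorded in \cite[Theorem~1.22]{habil}, the same normalisation already used in the proof of \autoref{sam}. One should also check that the hypotheses of \autoref{k0} apply, i.e.\ that $n_p\mid q/p$ and $n_{p'}\mid p-1$; both hold because $\CN\le\Aut(\langle u\rangle)\cong(\ZZ/q\ZZ)^\times$, which for $p>2$ is cyclic of order $\phi(q)=q-q/p=(q/p)(p-1)$. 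No induction or matrix analysis is needed here: all the work has been done in Theorem~B and \autoref{k0}, and the Corollary is a clean substitution.
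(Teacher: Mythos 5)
Your approach is exactly the paper's: take $W=(1)$, invoke Theorem~B, and evaluate $k_0(\langle u\rangle\rtimes\CN)$ via \autoref{k0}. The substance is fine, but the last displayed chain conflates two different meanings of $d$. You define $d$ to be the defect of $\overline b$ (so $C=(p^d)$ and the defect group of $b$ has order $qp^d$), whereas the Corollary's $d$ is the defect of $b$ itself. With your convention, Theorem~B gives
\[
k_0(B)\le\frac{q+p^s(r^2-1)}{r}\,p^{d_{\overline b}},
\]
and since $p^{d_{\overline b}}=p^{d_b}/q$ this is the same as
\[
k_0(B)\le\frac{q+p^s(r^2-1)}{qr}\,p^{d_b},
\]
which is the stated bound. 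Your final line, however, asserts
\[
\frac{q+p^s(r^2-1)}{r}\,p^{d}=\frac{q+p^s(r^2-1)}{qr}\,p^{d}
\]
with the same symbol $p^d$ on both sides; read literally this is off by a factor of $q$. The paper avoids this by writing $C=(p^d/q)$ from the outset with $d$ already denoting the defect of $b$. So: same proof, but tighten the bookkeeping so that a single convention for $d$ is used throughout and the passage $p^{d_{\overline b}}=p^{d_b}/q$ is made explicit.
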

\begin{proof}
Setting $q:=|\langle u\rangle|$ we obtain $C=(p^d/q)$ in the situation of Theorem~B. By \autoref{k0}, $k_0(\langle u\rangle\rtimes\CN)=(q+p^s(r^2-1))/r$ and the claim follows with $W=(1)$.
\end{proof}

The following result of Brauer cannot be seen in the framework of integral quadratic forms. It was a crucial ingredient in the proof of the $k(GV)$-Problem (see \cite[Theorem~2.5d]{Schmid}).

\begin{Cor}[{Brauer~\cite[5D]{BrauerBlSec2}}]\label{brauer}
Let $B$ be a $p$-block with defect $d$, and let $C$ be the Cartan matrix of a Brauer correspondent $b$ of $B$ in $\C_G(u)$ where $u\in\Z(D)$.
Then $k(B)\le l(b)/m\le l(b)p^d$ where
\[m:=\min\bigr\{xC^{-1}x^\textup{t}:x\in\ZZ^{l(b)}\setminus\{0\}\bigl\}.\]
\end{Cor}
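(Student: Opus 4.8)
The plan is to derive Corollary~\ref{brauer} from Theorem~A by choosing the weight matrix $W$ optimally. First I would set $q:=|\langle u\rangle|$ and let $\overline{C}:=q^{-1}C$ be the Cartan matrix of the dominated block $\overline{b}$ appearing in Theorem~A. The idea is that the minimum $m=\min\{xC^{-1}x^\text{t}:x\in\ZZ^{l(b)}\setminus\{0\}\}$ is attained (it is a positive minimum of a positive definite form on a lattice), so we may set $W:=m^{-1}C^{-1}$. Then for $x\in\ZZ^{l(b)}\setminus\{0\}$ we have $xWx^\text{t}=m^{-1}xC^{-1}x^\text{t}\ge 1$ by the definition of $m$, so $W$ is a legitimate weight matrix for Theorem~A.

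Next I would compute $\tr(W\overline{C})$. Since $W=m^{-1}C^{-1}=m^{-1}q\,\overline{C}^{-1}$, we get $\tr(W\overline{C})=m^{-1}q\,\tr(\overline{C}^{-1}\overline{C})=m^{-1}q\,l(b)$. Plugging into the second inequality of Theorem~A, namely $k(B)\le q\,\tr(W\overline{C})$ — wait, more carefully: Theorem~A gives $k(B)\le(n+\frac{q-1}{n})\tr(W\overline{C})\le q\,\tr(W\overline{C})$. However, this yields $k(B)\le q\cdot m^{-1}q\,l(b)$, which has an extra factor of $q$. The cleaner route is to apply Corollary~\ref{sam}-style reasoning directly: since $u\in\Z(D)$, the Cartan matrix $C$ of $b$ itself satisfies $k(B)\le\tr(W'C)$ for any integral positive definite $W'$ with $x W' x^\text{t}\ge 1$ on $\ZZ^{l(b)}\setminus\{0\}$ — this is exactly what the proof of Corollary~\ref{sam} extracts (take $W':=m^{-1}C^{-1}$, note $W=\frac1q W'$ relative to $\overline C$ so $q\tr(W\overline C)=\tr(W'C)$). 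Thus $k(B)\le\tr(m^{-1}C^{-1}C)=m^{-1}l(b)=l(b)/m$.

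For the second inequality $l(b)/m\le l(b)p^d$, it suffices to show $m\ge p^{-d}$, equivalently $xC^{-1}x^\text{t}\ge p^{-d}$ for all nonzero integral $x$. Here I would invoke the structure of $C$: a Brauer correspondent $b$ of $B$ in $\C_G(u)$ with $u\in\Z(D)$ has defect group $D$, so $b$ has defect $d$, and it is a standard fact that $p^d C^{-1}$ is an integral matrix (this is \autoref{padic} with the dominated block, or directly \cite[(5D)]{BrauerBlSec2}); moreover $p^dC^{-1}$ is positive definite, hence $x(p^dC^{-1})x^\text{t}$ is a positive integer for $x\in\ZZ^{l(b)}\setminus\{0\}$, so $x(p^dC^{-1})x^\text{t}\ge 1$, i.e. $xC^{-1}x^\text{t}\ge p^{-d}$, giving $m\ge p^{-d}$ as required.

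The main obstacle is bookkeeping the precise relationship between the Cartan matrix $C$ of $b$ and the Cartan matrix $\overline C=q^{-1}C$ of the dominated block $\overline b$, and making sure the factor $q$ cancels correctly so that we land on the sharp bound $k(B)\le l(b)/m$ rather than $q\,l(b)/m$; as in the proof of Corollary~\ref{sam}, the point is that $q\,\tr(W\overline C)=\tr(W C)$ when the same matrix $W$ is read against $C$ instead of $\overline C$, so the ``$\le q\,\tr(W\overline C)$'' form of Theorem~A is precisely ``$\le\tr(WC)$'' and no spurious factor appears. The integrality and positive-definiteness of $p^dC^{-1}$ is classical and can simply be cited.
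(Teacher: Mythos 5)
Your proof is correct and follows essentially the same approach as the paper: choose $W=m^{-1}C^{-1}$, note that $q\tr(W\overline{C})=\tr(WC)=l(b)/m$, and obtain $m\ge p^{-d}$ from the integrality of $p^dC^{-1}$ (equivalently, the elementary divisors of $C$ dividing $p^d$). The brief detour about the factor of $q$ resolves correctly, and your bookkeeping between $C$ and $\overline{C}$ matches what the paper implicitly does in its proof of Corollary~\ref{sam}.
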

\begin{proof}
By the definition of $m$, the matrix $W:=\frac{1}{m}C^{-1}$ is integral positive definite. Theorem~A gives $k(B)\le\tr(WC)=l(b)/m$. For the second inequality we recall that the elementary divisors of $C$ divide $p^d$. Hence, $p^dC^{-1}$ has integral entries and $m\ge p^{-d}$. 
\end{proof}

In \cite{habil}, we referred to the \emph{Cartan method} and the \emph{inverse Cartan method} when applying \autoref{sam} and \autoref{brauer} respectively. Now we know that both methods are special cases of a single theorem. In fact, the following examples show that Theorem~A is stronger than \autoref{sam} and \autoref{brauer}:

\begin{enumerate}[(i)]
\item Let $B$ be the principal $2$-block of the affine semilinear group $G=\AGammaL(1,8)$, and let $u=1$. Then 
\[C=\begin{pmatrix}
2&.&.&1&1\\
.&2&.&1&1\\
.&.&2&1&1\\
1&1&1&4&3\\
1&1&1&3&4
\end{pmatrix}\]
and $m=\frac{1}{2}$ with the notation of \autoref{brauer}. This implies $k(B)\le 10$. On the other hand, $q(x_1,\ldots,x_5)=x_1^2+\ldots+x_5^2+x_1x_2-x_1x_5-x_2x_5-x_3x_5-x_4x_5$ in \autoref{sam} gives $k(B)\le 8$ and in fact equality holds (cf. \cite[p. 84]{KuelshammerWada}).

\item Let $B$ be the principal $2$-block of $G=A_4\times A_4$ where $A_4$ denotes the alternating group of degree $4$. Let $u=1$. Then
\[C=(1+\delta_{ij})_{i,j=1}^3\otimes (1+\delta_{ij})_{i,j=1}^3\]
and $m=9/16$ with the notation of \autoref{brauer}. Hence, we obtain $k(B)\le 16$ and equality holds. On the other hand, it has been shown in \cite[Section~3]{SambalekB2} that there is no positive definite, integral quadratic form $q$ such that $k(B)\le 16$ in \autoref{sam}.
\end{enumerate}

We give a final application where the Cartan matrix $C$ is known up to basic sets. It reveals an interesting symmetry in the formula.

\begin{Prop}
Let $B$ be a block of a finite group with abelian defect group $D$ and inertial quotient $E\le\Aut(D)$. Suppose that $u\in D$ such that $D/\langle u\rangle$ is cyclic. Then
\[k(B)\le\Bigl(\lvert\N_E(\langle u\rangle)/\C_E(u)\rvert+\frac{|\langle u\rangle|-1}{\lvert\N_E(\langle u\rangle)/\C_E(u)\rvert}\Bigr)\Bigl(\lvert\C_E(u)\rvert+\frac{|D/\langle u\rangle|-1}{\lvert\C_E(u)\rvert}\Bigr)\le|D|.\]
\end{Prop}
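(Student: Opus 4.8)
We apply Theorem~A to the major subsection $(u,b)$, which is permitted since $D$ abelian gives $u\in\Z(D)=D$. The first task is to express the data occurring in Theorem~A in terms of the data occurring in the statement. Let $\mathcal{F}=\mathcal{F}_D(D\rtimes E)$ be the fusion system of $B$; it is controlled by $\N_G(D)$ because $D$ is abelian. By the classical fusion argument of Burnside quoted in the introduction (see \cite[Corollary~4.18]{LocalMethods}), the group $\CN=\N_G(\langle u\rangle,b)/\C_G(u)$, viewed inside $\Aut(\langle u\rangle)$, consists exactly of the automorphisms of $\langle u\rangle$ induced by $\N_E(\langle u\rangle)$; since the kernel of the restriction map $\N_E(\langle u\rangle)\to\Aut(\langle u\rangle)$ is $\C_E(u)$, we get $\lvert\CN\rvert=\lvert\N_E(\langle u\rangle)/\C_E(u)\rvert$. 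Moreover, writing $\overline{D}:=D/\langle u\rangle$, the block $\overline{b}$ of $\C_G(u)/\langle u\rangle$ dominated by $b$ has defect group $\overline{D}$, which is cyclic by hypothesis, and $l(b)=l(\overline{b})=e_0$ where $e_0:=\lvert\C_E(u)\rvert$ is the inertial index of $\overline{b}$.

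By the structure theory of blocks with cyclic defect group, $l(\overline{b})=e_0$ and, after passing to a suitable basic set (which does not affect our bound, as remarked in the introduction), the Cartan matrix of $\overline{b}$ may be taken to be
\[C=1_{e_0}+m\cdot 1^{e_0\times e_0},\qquad m:=\frac{\lvert\overline{D}\rvert-1}{e_0}\in\{0,1,2,\dots\}\]
(this matrix is, for instance, realised by the Brauer star whose exceptional vertex is the centre; here one uses that Brauer-tree Cartan matrices with the same number of edges and the same exceptional multiplicity are equivalent up to basic sets). We now apply Theorem~A with $W:=U_{e_0}$, which is integral positive definite by \autoref{lemwn}. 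The sum of the entries in the $i$-th row of $U_{e_0}$ equals $(\delta_{i1}+\delta_{ie_0})/2$, so every entry in the $i$-th row of $U_{e_0}\cdot 1^{e_0\times e_0}$ equals this row sum; hence $\tr(U_{e_0}\cdot 1^{e_0\times e_0})=1$ and
\[\tr(WC)=\tr(U_{e_0})+m\,\tr(U_{e_0}\cdot 1^{e_0\times e_0})=e_0+m=\lvert\C_E(u)\rvert+\frac{\lvert D/\langle u\rangle\rvert-1}{\lvert\C_E(u)\rvert}.\]

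Substituting this $W$ into the first inequality of Theorem~A yields precisely the first bound of the Proposition. For the second inequality, note that $\CN$ embeds into $\Aut(\langle u\rangle)$ and $\C_E(u)$ (as the inertial quotient of $\overline{b}$) embeds into $\Aut(\overline{D})$, so $\lvert\CN\rvert\le\lvert\langle u\rangle\rvert-1$ and $e_0\le\lvert\overline{D}\rvert-1$ whenever $\langle u\rangle\ne 1\ne\overline{D}$. By the convexity of $t\mapsto t+\frac{N-1}{t}$ on $[1,N-1]$ (already used above) the first factor is at most $\lvert\langle u\rangle\rvert$ and the second at most $\lvert\overline{D}\rvert$, the degenerate cases $\langle u\rangle=1$ or $\overline{D}=1$ being trivial. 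Multiplying gives $k(B)\le\lvert\langle u\rangle\rvert\cdot\lvert D/\langle u\rangle\rvert=\lvert D\rvert$.

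The substantive part is the local dictionary of the first paragraph — identifying $\CN$ with the fusion-theoretic automiser $\N_E(\langle u\rangle)/\C_E(u)$ via Burnside's argument, and showing that the dominated block $\overline{b}$ has cyclic defect group $D/\langle u\rangle$, inertial index $\lvert\C_E(u)\rvert$ and $l(\overline{b})=\lvert\C_E(u)\rvert$ — together with recalling the explicit Cartan matrix of a cyclic block and its basic-set independence. Once these inputs are in place, the remainder is the short computation displayed above.
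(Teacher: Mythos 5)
Your proof is correct and follows essentially the same route as the paper: apply Theorem~A to the major subsection $(u,b)$, identify $\CN$ with $\N_E(\langle u\rangle)/\C_E(u)$, use Dade's theorem to write the Cartan matrix of $\overline{b}$ as $(m+\delta_{ij})$ up to basic sets, take $W=U_{l(b)}$ to get $\tr(WC)=l(b)+m$, and finish with the convexity bound for each factor. You spell out the fusion-theoretic identification of $\CN$ and the row-sum calculation for $U_{e_0}\cdot 1^{e_0\times e_0}$ in more detail than the paper, but the substance and all the key inputs (Theorem~A, $W=U_{l(b)}$, Dade) are the same.
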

\begin{proof}
With the notation of Theorem~A we have $\CN=\N_E(\langle u\rangle)/\C_E(u)$. Moreover, $\overline{b}$ has defect group $D/\langle u\rangle$ and inertial quotient $\C_E(u)$. By Dade's theorem on blocks with cyclic defect groups, $l(b)=\lvert\C_E(u)\rvert$ and $C=(m+\delta_{ij})$ up to basic sets where $m:=(|D/\langle u\rangle|-1)/l(b)$ (see \cite[Theorem~8.6]{habil}). With $W=U_{l(b)}$ we obtain
\begin{align*}
k(B)&\le \Bigl(|\CN|+\frac{|\langle u\rangle-1}{|\CN|}\Bigr)\tr(WC)\\
&=\Bigl(\lvert\N_E(\langle u\rangle)/\C_E(u)\rvert+\frac{|\langle u\rangle|-1}{\lvert\N_E(\langle u\rangle)/\C_E(u)\rvert}\Bigr)\Bigl(\lvert\C_E(u)\rvert+\frac{|D/\langle u\rangle|-1}{\lvert\C_E(u)\rvert}\Bigr).
\end{align*}
The first factor is at most $|\langle u\rangle|$ and the second factor is bounded by $|D/\langle u\rangle|$. This implies the second inequality.
\end{proof}

In every example we have checked so far, Theorem~A implies Brauer's $k(B)$-Conjecture.

\section*{Acknowledgment}
I have been pursuing these formulas since my PhD in 2010 and it has always remained a challenge to prove the most general. The work on this paper was initiated in February 2018 when I received an invitation by Christine Bessenrodt to the representation theory days in Hanover. I thank her for this invitation.
The paper was written in summer 2018 while I was an interim professor at the University of Jena. I like to thank the mathematical institute for the hospitality and also my sister's family for letting me stay at their place. Moreover, I appreciate some comments on algebraic number theory by Tommy Hofmann.
The work is supported by the German Research Foundation (projects SA 2864/1-1 and SA 2864/3-1).


\begin{thebibliography}{10}

\bibitem{LocalMethods}
J.~L. Alperin and M. Broué, \textit{Local methods in block theory}, Ann. of
  Math. (2) \textbf{110} (1979), 143--157.

\bibitem{AKO}
M. Aschbacher, R. Kessar and B. Oliver, \textit{Fusion systems in algebra and
  topology}, London Mathematical Society Lecture Note Series, Vol. 391,
  Cambridge University Press, Cambridge, 2011.

\bibitem{Brandt}
J. Brandt, \textit{A lower bound for the number of irreducible characters in a
  block}, J. Algebra \textbf{74} (1982), 509--515.

\bibitem{Brauer46}
R. Brauer, \textit{On blocks of characters of groups of finite order. {II}},
  Proc. Nat. Acad. Sci. U.S.A. \textbf{32} (1946), 215--219.

\bibitem{BrauerBlSec2}
R. Brauer, \textit{On blocks and sections in finite groups. {II}}, Amer. J.
  Math. \textbf{90} (1968), 895--925.

\bibitem{BrauerFeit}
R. Brauer and W. Feit, \textit{On the number of irreducible characters of
  finite groups in a given block}, Proc. Nat. Acad. Sci. U.S.A. \textbf{45}
  (1959), 361--365.

\bibitem{BroueSanta}
M. Broué, \textit{On characters of height zero}, in: The {S}anta {C}ruz
  {C}onference on {F}inite {G}roups ({U}niv. {C}alifornia, {S}anta {C}ruz,
  {C}alif., 1979), 393--396, Proc. Sympos. Pure Math., Vol. 37, Amer. Math.
  Soc., Providence, RI, 1980.

\bibitem{Hardy}
G.~H. Hardy and E.~M. Wright, \textit{An introduction to the theory of
  numbers}, Sixth edition, Oxford University Press, Oxford, 2008.

\bibitem{HKS}
L. Héthelyi, B. Külshammer and B. Sambale, \textit{A
  note on Olsson's Conjecture}, J. Algebra \textbf{398} (2014), 364--385.

\bibitem{KuelshammerWada}
B. Külshammer and T. Wada, \textit{Some inequalities between invariants of
  blocks}, Arch. Math. (Basel) \textbf{79} (2002), 81--86.

\bibitem{NS}
G. Navarro and B. Sambale, \textit{A counterexample to {F}eit's {P}roblem
  {VIII} on decomposition numbers}, J. Algebra \textbf{477} (2017), 494--495.

\bibitem{OlssonIneq}
J.~B. Olsson, \textit{Inequalities for block-theoretic invariants}, in:
  Representations of algebras ({P}uebla, 1980), 270--284, Lecture Notes in
  Math., Vol. 903, Springer-Verlag, Berlin, 1981.

\bibitem{RobinsonNumber}
G.~R. Robinson, \textit{On the number of characters in a block}, J. Algebra
  \textbf{138} (1991), 515--521, Corrigendum: J. Algebra \textbf{144} (1991),
  266--267.

\bibitem{SambalekB}
B. Sambale, \textit{Cartan matrices and {B}rauer's {$k(B)$}-conjecture}, J.
  Algebra \textbf{331} (2011), 416--427.

\bibitem{SambalekB2}
B. Sambale, \textit{Cartan matrices and {B}rauer's {$k(B)$}-conjecture {II}},
  J. Algebra \textbf{337} (2011), 345--362.

\bibitem{habil}
B. Sambale, \textit{Blocks of finite groups and their invariants}, Springer
  Lecture Notes in Math., Vol. 2127, Springer-Verlag, Cham, 2014.

\bibitem{SambaleC3}
B. Sambale, \textit{Cartan matrices and {B}rauer's {$k(B)$}-{C}onjecture
  {III}}, Manuscripta Math. \textbf{146} (2015), 505--518.

\bibitem{Sambalerefine}
B. Sambale, \textit{Refinements of the orthogonality relations for blocks},
  Algebr. Represent. Theory \textbf{20} (2017), 1109--1131.

\bibitem{Schmid}
P. Schmid, \textit{Some remarks on the {$k(GV)$} theorem}, J. Group Theory
  \textbf{8} (2005), 589--604.

\bibitem{Wada6}
T. Wada, \textit{A lower bound of the {P}erron-{F}robenius eigenvalue of the
  {C}artan matrix of a finite group}, Arch. Math. (Basel) \textbf{73} (1999),
  407--413.

\end{thebibliography}
\end{document}